\newcommand*{\mytitle}{Densities\ of\ nested\ Archimedean\ copulas}
\newcommand*{\myauthorone}{Marius\ Hofert}
\newcommand*{\mycontactone}{RiskLab,\ Department\ of\ Mathematics,\ ETH\ Zurich,\ 8092\ Zurich,\ Switzerland,\ \href{mailto:marius.hofert@math.ethz.ch}{\nolinkurl{marius.hofert@math.ethz.ch}}}
\newcommand*{\mycomment}{The\ author\ (Willis\ Research\ Fellow)\ thanks\ Willis\ Re\ for\ financial\ support\ while\ this\ work\ was\ being\ completed.}
\newcommand*{\myauthortwo}{David\ Pham}
\newcommand*{\mycontacttwo}{Department\ of\ Mathematics\ and\ Statistics,\ University\ of\ Montreal,\ H3T\ 1J4\ Montreal,\ Quebec,\ Canada,\ \href{mailto:david-olivier.pham@umontreal.ca}{\nolinkurl{david-olivier.pham@umontreal.ca}}}
\newcommand*{\mysubject}{Article}
\newcommand\myisodate{\number\year-\ifcase\month\or 01\or 02\or 03\or 04\or 05\or 06\or 07\or 08\or 09\or 10\or 11\or 12\fi-\ifcase\day\or 01\or 02\or 03\or 04\or 05\or 06\or 07\or 08\or 09\or 10\or 11\or 12\or 13\or 14\or 15\or 16\or 17\or 18\or 19\or 20\or 21\or 22\or 23\or 24\or 25\or 26\or 27\or 28\or 29\or 30\or 31\fi}%
\newcolumntype{d}[2]{D{.}{.}{#1.#2}}%
\newcommand*{\abstractnoindent}{}%
\let\abstractnoindent\abstract
\renewcommand*{\abstract}{\let\quotation\quote\let\endquotation\endquote
  \abstractnoindent}
\renewcommand*{\cite}[2][]{\textcite[#1]{#2}}
\newtheoremstyle{mythmstyle}%
{0.5em}%
{0.5em}%
{}%
{}%
{\sffamily\bfseries}%
{}%
{\newline}%
{\thmname{#1}\ \thmnumber{#2}\ \thmnote{(#3)}}%
\newcommand*{\myskip}{~\vspace{-1.2em}}%
\theoremstyle{mythmstyle}
\newtheorem{definition}{Definition}[section]%
\newtheorem{proposition}[definition]{Proposition}
\newtheorem{lemma}[definition]{Lemma}
\newtheorem{theorem}[definition]{Theorem}
\newtheorem{remark}[definition]{Remark}
\renewcommand*\proofname{Proof}
\renewenvironment{proof}[1][\proofname]{\par
  \pushQED{\qed}%
  \normalfont\topsep2\p@\@plus2\p@\relax
  \trivlist
\item[\hskip\labelsep
  \sffamily\bfseries #1]\mbox{}\hfill\\*\ignorespaces
}{%
  \popQED\endtrivlist\@endpefalse
}
\renewcommand*{\cite}[2][]{\textcite[#1]{#2}}
\newlength{\mytopsep}
\newlength{\myitemsep}
\newcommand{\T}{^{\top}}
\newcommand*{\IN}{\mathbb{N}}
\newcommand*{\IR}{\mathbb{R}}
\newcommand*{\IE}{\mathbb{E}}
\newcommand*{\sign}{\operatorname*{sign}}
\newcommand*{\Li}{\operatorname*{Li}}
\newcommand*{\cpsi}{{\mathring{\psi}}}
\newcommand*{\psii}{{\psi^{-1}}}
\newcommand*{\psis}[2]{{\psi_{#1}^{#2}}}
\newcommand*{\psiis}[1]{{\psi_{#1}^{-1}}}
\newcommand*{\LS}{\mathcal{LS}}
\newcommand*{\LSi}{\LS^{-1}}
\newcommand*{\var}[3]{{{#1}^{#2}_{#3}}}
\begin{document}
\thispagestyle{plain}
\begin{center}
  \sffamily
  {\bfseries\LARGE\mytitle\par}
  \bigskip
  {\Large\myauthorone\footnote{\mycontactone. \mycomment},\ \myauthortwo\footnote{\mycontacttwo}\par
    \bigskip
    \myisodate\par}
\end{center}
\par\bigskip
\begin{abstract}
  Nested Archimedean copulas recently gained interest since they generalize the
  well-known class of Archimedean copulas to allow for partial
  asymmetry. Sampling algorithms and strategies have been well investigated for
  nested Archimedean copulas. However, for likelihood based inference it is
  important to have the density. The present work fills this gap. A general
  formula for the derivatives of the nodes and inner generators appearing in
  nested Archimedean copulas is developed. This leads to a tractable formula for
  the density of nested Archimedean copulas in arbitrary dimensions if the
  number of nesting levels is not too large. Various examples including famous
  Archimedean families and transformations of such are given. Furthermore, a
  numerically efficient way to evaluate the log-density is presented.
\end{abstract}
\minisec{Keywords}
Nested Archimedean copulas, generator derivatives, likelihood-based inference.
\minisec{MSC2010}
62H99, 65C60, 62H12, 62F10.%
\section{Introduction}\label{sec.intro}
There has recently been interest in multivariate hierarchical models, that is,
models that are able to capture different dependencies between and within
different groups of random variables. One such class of models is based on
nested Archimedean copulas. A \emph{partially nested Archimedean copula} $C$
with two nesting \emph{levels} and $d_0$ \emph{child copulas} (or \emph{sectors} or \emph{groups}), is given by
\begin{align}
  C(\bm{u})=C_0(C_1(\bm{u}_1),\dots,C_{d_0}(\bm{u}_{d_0})),\ \bm{u}=(\bm{u}_1,\dots,\bm{u}_{d_0})\T,\label{nac}
\end{align}
where $d_0$ denotes the dimension of $C_0$ and each copula $C_s$,
$s\in\{0,\dots,d_0\}$, is Archimedean with a completely monotone \emph{generator} $\psi_s$, that is,
\begin{align}
  C_s(\bm{u}_s)=\psi_s(\psiis{s}(u_{s1})+\dots+\psiis{s}(u_{sd_s}))=\psi_s(t_s(\bm{u}_s)),\label{eq:AC}
\end{align}
where
\begin{align*}
  t_s(\bm{u}_s)=\sum_{j=1}^{d_s}\psiis{s}(u_{sj})
\end{align*}
and $\psi_s:[0,\infty]\to[0,1]$ is continuous, $\psi_s(0)=1$,
$\psi_s(\infty)=\lim_{t\to\infty}\psi_s(t)=0$, and $(-1)^k\psi_s^{(k)}(t)\ge0$
for all $k\in\IN_{0}$, $t\in(0,\infty)$. The set of all completely monotone
Archimedean generators is denoted by $\Psi_\infty$ in what follows. The copula $C_0$ is referred to as \emph{root copula}. Model (\ref{nac})
provides an intuitive hierarchical structure, since, for example, if $\bm{U}\sim
C$ the pair $(U_{sj},U_{sk})\T$ ($j\neq k$) has joint copula $C_s$ whereas the
pair $(U_{rj},U_{sk})\T$ ($r\neq s$) follows the root copula $C_0$. One can
therefore directly say what the bivariate margins are and theoretical results
about measures of association, for example, directly apply. Furthermore, such a
construction provides an explicit form for the copula itself, which is important,
for example, in likelihood-based inference for censored data. More
complicated nesting structures can be constructed; see Section \ref{sec.ext}. In general, a \emph{nested Archimedean copula} is an Archimedean copula with arguments possibly replaced by other nested Archimedean copulas.

For statistical applications it is desirable to be able to evaluate the density
of a multivariate model, for example, for parameter estimation or when
conditional copulas are required (for example, for goodness-of-fit testing via Rosenblatt's
transform; see \cite{genestremillardbeaudoin2009}). For Archimedean copulas, the density (if it exists) is theoretically trivial to write down; for (\ref{eq:AC}), one obtains
\begin{align*}
  c_s(\bm{u}_s)=\psis{s}{(d)}(t_s(\bm{u}_s))\prod_{j=1}^{d_s}(\psiis{s})^\prime(u_{sj}),\ \bm{u}_s\in(0,1)^{d_s}.
\end{align*}
However, the appearing generator derivatives $\psis{s}{(d)}$ are non-trivial to
access theoretically and, even more, computationally. This issue has recently
been solved for several well-known Archimedean copulas and transformations of
such; see \cite{hofertmaechlermcneil2012b} and
\cite{hofertmaechlermcneil2012a}. Our goal is to extend these results to the corresponding nested Archimedean copulas. Note that this is more challenging because differentiating (\ref{nac}) is more complicated due to the inner derivatives that appear when applying the Chain Rule; in contrast to Archimedean copulas, these inner derivatives depend on variables with respect to which one has to differentiate again. Already in low dimensions the corresponding formulas for the density $c$ become challenging to write down and, even more, to evaluate in a numerically stable way.

After a brief introduction and overview about nested Archimedean copulas in
Section \ref{sec.nac}, we tackle the problem of computing the density of
(\ref{nac}) by first deriving a convenient form for the copula. This will allow
us to compute the density; see Section \ref{sec.dnac}. All necessary details for
several well-known Archimedean families are provided in Section
\ref{sec.ex}. Section \ref{sec.num} addresses numerical evaluation of the
log-density. Section \ref{sec.ext} presents the density for three-level nested
Archimedean copulas and extensions to higher nesting levels are briefly
addressed. %
For the reader's convenience, proofs are deferred to the appendix.

\section{Nested Archimedean copulas}\label{sec.nac}
The class of nested Archimedean copulas was first considered in \cite[p.\
87]{joe1997} in the three- and four-dimensional case and later by
\cite{mcneil2008} in the general $d$-dimensional case. \cite{mcneil2008} and
\cite{hofert2011b} derive an explicit stochastic representation for nested
Archimedean copulas which allows for a fast sampling algorithm of nested
Archimedean copulas similar to the Marshall--Olkin algorithm for Archimedean
copulas; see \cite{marshallolkin1988} for the latter. \cite{hofert2011a} provides efficient sampling strategies for the most important ingredients to this algorithm, the random variables responsible for introducing hierarchical dependencies. An implementation for several well-known Archimedean families (and transformations of such) is provided by the \textsf{R} package \texttt{copula}; see \cite{hofertmaechler2011}.

Although nesting is possible in more complicated ways (see Section
\ref{sec.ext}), in the following we focus on nested Archimedean copulas of Type
(\ref{nac}) (with some child copulas possibly shrunk to single arguments of
$C_0$). By Bernstein's Theorem, each $\psi\in\Psi_\infty$ is the Laplace--Stieltjes transform of a distribution function $F$ on
$[0,\infty)$ with $F(0)=0$. A sufficient condition under which (\ref{nac}) is
indeed a proper copula is then that the \emph{nodes}
\begin{align*}
  \cpsi_{0s}=\psiis{0}\circ\psi_s,\ s\in\{1,\dots,d_0\},
\end{align*}
have completely monotone first order derivatives; see \cite{mcneil2008}. Note that this \emph{sufficient nesting condition} is indeed only sufficient but not necessary. For example, if $\psi_0(t)=-\log(1-(1-e^{-\theta_0})\exp(-t))/\theta_0$ denotes the generator of a Frank copula and $\psi_1(t)=(1+t)^{-1/\theta_1}$ the generator of a Clayton copula, then $C(\bm{u})=C_0(u_1,C_1(u_2,u_3))$ is a valid (nested Archimedean) copula for all $\theta_0,\theta_1$ such that $\theta_0/(1-e^{-\theta_0})-1\le\theta_1$ although $\cpsi_{01}$ is not completely monotone for any parameters $\theta_0,\theta_1$.

Among the most widely used parametric Archimedean families are those of
Ali--Mikhail--Haq, Clayton, Frank, Gumbel, and Joe; see
\cite{hofertmaechlermcneil2012a} for the corresponding generators, their
derivatives, Laplace-Stieltjes inverses, and properties of the copula
families. These one-parameter families can easily be extended to allow for more
parameters, for example, via outer power transformations. For more details on
this and other aspects of nested Archimedean copulas we refer to
\cite{hofert2010c} and the references therein.

\section{Inner generator derivatives and densities for two-level nested Archimedean copulas}\label{sec.dnac}
\subsection{The basic idea}\label{sec.basic}
Let $C$ be a $d$-dimensional nested Archimedean copula of Type (\ref{nac}) (with
some child copulas possibly shrunk to single arguments of $C_0$) and assume the
sufficient nesting condition to hold; for the Ali--Mikhail--Haq, Clayton, Frank,
Gumbel, and Joe families, this is fulfilled as long as all generators belong to the same family and $\theta_0\le\theta_s$, $s\in\{1,\dots,d_0\}$. This condition implies that each copula $C_s$, $s\in\{1,\dots,d_0\}$, is more concordant than $C_0$.

One of the main ingredients we need in the following is the function
\begin{align}
  \psi_{0s}(t;v)=\exp(-v\cpsi_{0s}(t))\label{psi0s}
\end{align}
which we refer to as \emph{inner generator}. It is a proper generator in $t$ for
each $v>0$ as a composition of the completely monotone function
$\exp(-v\,\cdot)$ with $\cpsi_{0s}$ which has completely monotone derivative. With $F_0=\LSi[\psi_0]$, we obtain
\begin{align}
  C(\bm{u})&=C_0(C_1(\bm{u}_1),\dots,C_{d_0}(\bm{u}_{d_0}))=\int_0^\infty\exp\biggl(-v_0\sum_{s=1}^{d_0}\cpsi_{0s}(t_s(\bm{u}_s))\biggr)\,dF_0(v_0)\notag\\
&=\int_0^\infty\prod_{s=1}^{d_0}\psi_{0s}(t_s(\bm{u}_s);v_0)\,dF_0(v_0)\label{nac.repr}
\end{align}
By our assumption of having completely monotone generators, the density $c$ of
$C$ exists and is given by
\begin{align*}
  c(\bm{u})=\frac{\partial^d}{\partial u_{d_0d_{d_0}}\cdots\partial u_{11}}C(\bm{u}).
\end{align*}
Instead of differentiating (\ref{nac}) directly, the idea is now to use
Representation (\ref{nac.repr}). By differentiating under the integral sign, the
density $c$ allows for the representation
\begin{align}
c(\bm{u})&=\int_0^\infty\prod_{s=1}^{d_0}\psis{0s}{(d_s)}(t_s(\bm{u}_s);v_0)\,dF_0(v_0)\cdot\prod_{s=1}^{d_0}\prod_{j=1}^{d_s}(\psiis{s})^\prime(u_{sj})\notag\\
&=\IE\biggl[\,\prod_{s=1}^{d_0}\psis{0s}{(d_s)}(t_s(\bm{u}_s);V_0)\biggr]\cdot\prod_{s=1}^{d_0}\prod_{j=1}^{d_s}(\psiis{s})^\prime(u_{sj}).\label{dnac}
\end{align}
For the cost of one integral (which will be computed
explicitly below!), one can therefore easily compute the density $c$ (theoretically) as a
$F_0$-mixture. This is especially advantageous in large dimensions
as the complexity of the problem does not (again, theoretically) depend on the
sizes of the child copulas too much, rather on the number of children.

From Equation (\ref{dnac}), we identify the following key challenges:
\begin{enumerate}[label=\sffamily\bfseries Challenge \arabic*\ ,leftmargin=*,align=left,topsep=\mytopsep,itemsep=\myitemsep]
\item Find the derivatives of the inner generators $\psi_{0s}(t;v_0)$;
\item Compute their product;
\item Integrate it with respect to the mixture distribution function $F_0=\LSi[\psi_0]$.
\end{enumerate}
All three challenges will be solved in Section \ref{sec.main} with the help of
the tools presented in the following section.

\subsection{The tools needed: Fa{\`a} di Bruno's formula and Bell polynomials}
One formula which proves to be useful here, is the expression of the $n$th
derivative of a composition of functions; see \cite{craik2005}. Although this formula dates back to
the work of \cite{arbogast1800}, it is named after the mathematician Fa\`a di Bruno. For suitable functions $f$ and $g$, Fa\`a di Bruno's formula states that
\begin{align}
  (f\circ g)^{(n)}(x)=\sum_{k=1}^nf^{(k)}(g(x))\sum_{\bm{j}\in\mathcal{P}_{n,k}}\binom{n}{j_1,\dots,j_{n-k+1}}\prod_{l=1}^{n-k+1}\biggl(\frac{g^{(l)}(x)}{l!}\biggr)^{j_l},\label{FdB}
\end{align}
where $\binom{n}{j_1,\dots,j_n}=\frac{n!}{j_1!\cdot\ldots\cdot j_n!}$ denotes a multinomial coefficient, $\bm{j}=(j_1,\dots,j_n)\T\in\IN_0^n$, and
\begin{align}
  \mathcal{P}_{n,k}=\biggl\{\bm{j}\in\IN_0^{n-k+1}:\sum_{i=1}^{n-k+1}ij_i=n\ \text{and}\ \sum_{i=1}^{n-k+1}j_i=k\biggr\}.\label{Bell.Pnk}
\end{align}
Alternatively, one can use \emph{Bell polynomials} to reformulate (\ref{FdB}). These are defined by
\begin{align}
  B_{n,k}(x_1,\dots,x_{n-k+1})=\sum_{\bm{j}\in\mathcal{P}_{n,k}}\binom{n}{j_1,\dots,j_{n-k+1}}\prod_{l=1}^{n-k+1}\biggl(\frac{x_l}{l!}\biggr)^{j_l}.\label{Bell}
\end{align}
This implies that (\ref{FdB}) can be written as
\begin{align}
  (f\circ g)^{(n)}(x)=\sum_{k=1}^nf^{(k)}(g(x))B_{n,k}(g^\prime(x),g^{\prime\prime}(x),\dots,g^{(n-k+1)}(x))\label{FdB.Bell}
\end{align}

In the sections to come, we frequently need the following results. Here, $(x)_n=x\cdot(x-1)\cdot\ldots\cdot(x-n+1)$ denotes the \emph{falling factorial}, and $s(n,k)$ and $S(n,k)$ denote the \emph{Stirling numbers of the first} and \emph{second kind}, respectively, given by the recurrence relations
    \begin{align*}
      s(n+1,k)&=s(n,k-1)-ns(n,k),\\
      S(n+1,k)&=S(n,k-1)+kS(n,k),
    \end{align*}
    for all $k\in\IN$, $n\in\IN_0$, with $s(0,0)=S(0,0)=1$ and
    $s(n,0)=s(0,n)=S(n,0)=S(0,n)=0$ for all $n\in\IN$. Note that for $n\in\IN$ (in particular $n\neq0$), the Stirling numbers of the first kind satisfy
    \begin{align}
      (x)_n=\sum_{j=1}^ns(n,j)x^j.\label{Stirling.first}
    \end{align}
\begin{lemma}\label{lem.Bell}
  Let $B_{n,k}$ be the Bell polynomial as in (\ref{Bell}) and $n\in\IN$. Then
\begin{enumerate}[label=(\arabic*),leftmargin=*,align=left,topsep=\mytopsep,itemsep=\myitemsep]
  \item\label{lem.Bell.1} For $\bm{j}\in\mathcal{P}_{n,k}$, $\sum_{l=1}^{n-k+1}(x-l)j_l=xk-n$;
  \item\label{lem.Bell.2} $B_{n,k}(x,\dots,x)=S(n,k)x^k$, $k\in\{0,\dots,n\}$;
  \item\label{lem.Bell.3} $B_{n,k}(-x,\dots,(-1)^{n-k+1}x)=(-1)^nS(n,k)x^k$, $k\in\{0,\dots,n\}$;
  \item\label{lem.Bell.4} $\sign\bigl(B_{n,k}(g^\prime(x),g^{\prime\prime}(x),\dots,g^{(n-k+1)}(x))\bigr)=(-1)^{n-k}$ for all $x$ if $g^\prime$ is completely monotone.
  \end{enumerate}
\end{lemma}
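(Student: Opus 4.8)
The plan is to treat the four assertions in order, exploiting the two defining constraints of $\mathcal{P}_{n,k}$ in (\ref{Bell.Pnk}), namely $\sum_i j_i=k$ and $\sum_i ij_i=n$, which do all the heavy lifting. Part \ref{lem.Bell.1} is immediate: splitting the sum gives $\sum_{l=1}^{n-k+1}(x-l)j_l=x\sum_l j_l-\sum_l lj_l=xk-n$. This identity will be reused several times below, most importantly at $x=1$, where it reads $\sum_l(l-1)j_l=n-k$.

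For part \ref{lem.Bell.2}, I would factor the common value $x$ out of each term of (\ref{Bell}). Since $\prod_l x^{j_l}=x^{\sum_l j_l}=x^k$ by the first constraint, every summand contributes the same power $x^k$, so $B_{n,k}(x,\dots,x)=x^k B_{n,k}(1,\dots,1)$. It then remains to identify $B_{n,k}(1,\dots,1)$ with $S(n,k)$. I would establish this via the exponential generating function of the Bell polynomials: setting all arguments equal to $1$ collapses $\sum_{n\ge k}B_{n,k}(1,\dots,1)t^n/n!$ to $\tfrac{1}{k!}(e^t-1)^k$, which is precisely the generating function of the Stirling numbers of the second kind, so comparing coefficients of $t^n/n!$ yields $B_{n,k}(1,\dots,1)=S(n,k)$. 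Alternatively, one checks directly that $B_{n,k}(1,\dots,1)$ obeys the defining recurrence for $S(n,k)$ quoted before the statement.

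Part \ref{lem.Bell.3} then follows from part \ref{lem.Bell.2} by a sign/homogeneity argument. With the $l$th argument equal to $(-1)^l x$, each term of (\ref{Bell}) acquires the factor $\prod_l\bigl((-1)^l\bigr)^{j_l}=(-1)^{\sum_l lj_l}=(-1)^n$ by the second constraint, while the magnitudes are exactly those occurring in $B_{n,k}(x,\dots,x)$. Pulling out the common factor $(-1)^n$ gives $B_{n,k}(-x,\dots,(-1)^{n-k+1}x)=(-1)^n B_{n,k}(x,\dots,x)=(-1)^n S(n,k)x^k$.

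Finally, for part \ref{lem.Bell.4}, complete monotonicity of $g'$ means $(-1)^m(g')^{(m)}\ge0$, i.e.\ $(-1)^{l-1}g^{(l)}\ge0$, so each derivative $g^{(l)}$ carries the fixed sign $(-1)^{l-1}$. Since the multinomial coefficients and the factorials $l!$ appearing in (\ref{Bell}) are positive, the sign of a generic nonvanishing term is $\prod_l\bigl((-1)^{l-1}\bigr)^{j_l}=(-1)^{\sum_l(l-1)j_l}=(-1)^{n-k}$, where the exponent is evaluated using part \ref{lem.Bell.1} at $x=1$. As every term in the sum over $\bm{j}\in\mathcal{P}_{n,k}$ shares this common sign, so does $B_{n,k}(g'(x),\dots,g^{(n-k+1)}(x))$, which proves the claim. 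The only nonroutine input is the identity $B_{n,k}(1,\dots,1)=S(n,k)$ needed in part \ref{lem.Bell.2}; everything else is bookkeeping with the two constraints defining $\mathcal{P}_{n,k}$.
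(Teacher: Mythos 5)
Your proof is correct and follows essentially the same route as the paper: part \ref{lem.Bell.1} by splitting the sum, parts \ref{lem.Bell.2} and \ref{lem.Bell.3} by factoring out $x^k$ resp.\ $(-1)^n$ using the two constraints defining $\mathcal{P}_{n,k}$, and part \ref{lem.Bell.4} by tracking the sign $(-1)^{l-1}$ of each $g^{(l)}$. The only difference is that the paper simply cites the identity $B_{n,k}(1,\dots,1)=S(n,k)$ from the literature (Comtet, Charalambides), whereas you derive it from the exponential generating function $\sum_{n\ge k}B_{n,k}(1,\dots,1)t^n/n!=\tfrac{1}{k!}(e^t-1)^k$ --- a legitimate, self-contained justification of the same fact.
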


\begin{proposition}\label{Bell.id}
   Let
   \begin{align*}
     s_{nk}(x)=\sum_{l=k}^ns(n,l)S(l,k)x^l=(-1)^n\sum_{l=k}^n\lvert s(n,l)\rvert S(l,k)(-x)^l.
   \end{align*}
   Then
   \begin{enumerate}[label=(\arabic*),leftmargin=*,align=left,topsep=\mytopsep,itemsep=\myitemsep]
   \item\label{Bell.id.1} For all $k\in\{0,\dots,n\}$, $B_{n,k}((x)_1y^{x-1},\dots,(x)_{n-k+1}y^{x-(n-k+1)})=y^{xk-n}s_{nk}(x)$;
   \item\label{Bell.id.2} $\sum_{k=1}^{n}(-1)_ks_{nk}(x)=(-x)_n$;
   \item\label{Bell.id.3} If $x\in(0,1]$, $\sign(s_{nk}(x))=(-1)^{n-k}$.
   \end{enumerate}
 \end{proposition}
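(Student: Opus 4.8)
The plan is to establish the three parts in order, using part~\ref{Bell.id.1} as the engine for part~\ref{Bell.id.3} and treating part~\ref{Bell.id.2} as a self-contained Stirling-number identity. For part~\ref{Bell.id.1} I first separate the powers of $y$: since the $l$th argument is $(x)_ly^{x-l}$, every summand of (\ref{Bell}) factors as $\prod_{l=1}^{n-k+1}((x)_l/l!)^{j_l}\cdot\prod_{l=1}^{n-k+1}(y^{x-l})^{j_l}$, and the second product equals $y^{\sum_l(x-l)j_l}=y^{xk-n}$ by Lemma~\ref{lem.Bell}~\ref{lem.Bell.1}, uniformly over $\bm{j}\in\mathcal{P}_{n,k}$. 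Factoring it out reduces the claim to $B_{n,k}((x)_1,\dots,(x)_{n-k+1})=s_{nk}(x)$, which is where the real work sits and which I would extract from the exponential generating function of the Bell polynomials,
\begin{align*}
  \sum_{n\ge k}B_{n,k}(a_1,a_2,\dots)\frac{t^n}{n!}=\frac{1}{k!}\biggl(\sum_{m\ge1}a_m\frac{t^m}{m!}\biggr)^k.
\end{align*}
Taking $a_m=(x)_m$ gives $\sum_{m\ge1}(x)_mt^m/m!=(1+t)^x-1=e^{x\log(1+t)}-1$; expanding the $k$th power through $\sum_{l\ge k}S(l,k)u^l/l!=(e^u-1)^k/k!$ at $u=x\log(1+t)$ and then substituting $(\log(1+t))^l/l!=\sum_{n\ge l}s(n,l)t^n/n!$ collapses the double sum to $\sum_{n\ge k}\bigl(\sum_{l=k}^ns(n,l)S(l,k)x^l\bigr)t^n/n!$. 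Comparing coefficients of $t^n/n!$ yields $B_{n,k}((x)_1,\dots)=s_{nk}(x)$, and the alternative representation of $s_{nk}$ is then immediate from $s(n,l)=(-1)^{n-l}\lvert s(n,l)\rvert$.

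For part~\ref{Bell.id.2} I would insert the definition of $s_{nk}$ and interchange the order of summation to obtain $\sum_{k=1}^n(-1)_ks_{nk}(x)=\sum_{l=1}^ns(n,l)x^l\sum_{k=1}^l(-1)_kS(l,k)$. The inner sum is the expansion of a power in falling factorials, $z^l=\sum_{k=0}^lS(l,k)(z)_k$, evaluated at $z=-1$, hence equals $(-1)^l$. The outer sum therefore becomes $\sum_{l=1}^ns(n,l)(-x)^l$, which is precisely $(-x)_n$ by (\ref{Stirling.first}).

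Part~\ref{Bell.id.3} is the one where I expect the genuine care to be needed, namely at the endpoint $x=1$. The trick is to read part~\ref{Bell.id.1} with $y=t$ through the function $g(t)=t^x$: since $g^{(l)}(t)=(x)_lt^{x-l}$, part~\ref{Bell.id.1} gives $B_{n,k}(g'(t),\dots,g^{(n-k+1)}(t))=t^{xk-n}s_{nk}(x)$ with $t^{xk-n}>0$. For $x\in(0,1)$ the derivative $g'(t)=xt^{x-1}$ is completely monotone, because $(-1)^j(g')^{(j)}(t)=(-1)^jx(x-1)_jt^{x-1-j}\ge0$, the factor $(x-1)_j$ carrying sign $(-1)^j$ on this range; Lemma~\ref{lem.Bell}~\ref{lem.Bell.4} then pins down $\sign\bigl(B_{n,k}(g'(t),\dots)\bigr)=(-1)^{n-k}$, and dividing by the positive factor $t^{xk-n}$ gives $\sign(s_{nk}(x))=(-1)^{n-k}$. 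At $x=1$ the function $g'\equiv1$ is only degenerately completely monotone and the arguments $(x)_l$ with $l\ge2$ vanish, so the strict sign argument breaks down; here I would fall back on $s_{nk}(1)=\sum_{l=k}^ns(n,l)S(l,k)=\delta_{nk}$, the orthogonality of the two kinds of Stirling numbers, which shows the claim persists on all of $(0,1]$ in the non-strict form $(-1)^{n-k}s_{nk}(x)\ge0$.
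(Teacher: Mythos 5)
Your proposal is correct, and for parts \ref{Bell.id.1} and \ref{Bell.id.3} it takes a genuinely different route from the paper; part \ref{Bell.id.2} coincides with the paper's proof (interchange the summations, evaluate $z^l=\sum_{k=0}^lS(l,k)(z)_k$ at $z=-1$, then apply (\ref{Stirling.first})). For part \ref{Bell.id.1}, the paper computes the $n$th derivative of the auxiliary function $h(t)=\exp((yt)^x)$ in two ways --- once via Fa\`a di Bruno's formula, once by differentiating the series expansion of $h$ termwise and invoking the Dobinski-type identity $e^{-z}\sum_{k\ge0}k^lz^k/k!=\sum_{k=0}^lS(l,k)z^k$ --- and then matches coefficients of the powers $t^{xk}$. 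You instead strip off the powers of $y$ via Lemma \ref{lem.Bell} \ref{lem.Bell.1} (as the paper also does inside its computation) and identify $B_{n,k}((x)_1,\dots,(x)_{n-k+1})=s_{nk}(x)$ from the exponential generating function of the partial Bell polynomials composed with the two Stirling EGFs. Both are generating-function arguments of comparable length; yours is arguably cleaner because the coefficient comparison happens in a formal power series in $t$ rather than among the functions $t^{xk}$, but it imports the Bell EGF as an extra ingredient, whereas the paper reuses only machinery it has already set up. For part \ref{Bell.id.3} the difference is more substantial: the paper identifies $(-1)^{n-k}s_{nk}(x)\,k!/n!$ with the (positive) probability mass function behind the inner generator of a nested Joe copula, outsourcing positivity to \cite[p.\ 99]{hofert2010c}; you recycle part \ref{Bell.id.1} with $g(t)=t^x$, verify complete monotonicity of $g'$ for $x\in(0,1)$, and let Lemma \ref{lem.Bell} \ref{lem.Bell.4} deliver the sign, which keeps the proof self-contained and purely analytic. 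Your separate treatment of the endpoint $x=1$ is in fact more careful than the paper's: there $s_{nk}(1)=\sum_{l=k}^ns(n,l)S(l,k)=\delta_{nk}$ by Stirling orthogonality, so $s_{nk}(1)=0$ for $k<n$ and the sign statement survives only in the weak form $(-1)^{n-k}s_{nk}(x)\ge0$ --- a degeneracy the paper's assertion $p(n;k)>0$ silently ignores (at $x=1$ the Joe inner generator collapses to $e^{-kt}$, a point mass). The only omission on your side is the trivial case $k=0$, where both sides vanish for $n\ge1$; the paper skips it as well, and it is immediate.
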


\subsection{The main result}\label{sec.main}
We are now able to derive a general formula for the derivatives of the inner
generators and also for the density of nested Archimedean copulas of Type
(\ref{nac}). It will follow from Fa\`a di Bruno's formula that the derivatives
of the inner generators $\psi_{0s}(t;v_0)$ are the inner generators themselves
times a polynomial in $-v_0$. The product of these
derivatives can then be computed as a Cauchy
product. Interpreting the appearing quantities correctly allows us to compute
the expectation with respect to $F_0$ via the derivatives of $\psi_0$. This
solves all three of the above challenges.
\begin{theorem}\label{main.theorem}
  Let $\psi_s\in\Psi_\infty$, $s\in\{0,\dots,d_0\}$, such that $\cpsi_{0s}$ has completely monotone derivative for all $s\in\{1,\dots,d_0\}$.
  \begin{enumerate}[label=(\arabic*),leftmargin=*,align=left,topsep=\mytopsep,itemsep=\myitemsep]
  \item\label{main.theorem.1} For all $n\in\IN$,
    \begin{align}
      \psis{0s}{(n)}(t;v_0)=\psi_{0s}(t;v_0)\sum_{k=1}^na_{s,nk}(t)(-v_0)^k,\label{psi0sder}
    \end{align}
    where
    \begin{align}
      a_{s,nk}(t)=B_{n,k}(\cpsi_{0s}^\prime(t),\dots,\cpsi_{0s}^{(n-k+1)}(t))\label{as.nk}
    \end{align}
    with $\sign(a_{s,nk}(t))=(-1)^{n-k}$ and if $\psi_s=\psi_0$ and $n=k=1$ then $a_{s,nk}(t)=1$ for all $t$.
  \item\label{main.theorem.2} The density of (\ref{nac}) is given by
    \begin{align}
       c(\bm{u})=\biggl(\,\sum_{k=d_0}^d\var{b}{d_0}{\bm{d},k}(\bm{t}(\bm{u}))\psis{0}{(k)}(t(\bm{u}))\biggr)\cdot\prod_{s=1}^{d_0}\prod_{j=1}^{d_s}(\psiis{s})^\prime(u_{sj}),\label{dnacop}
    \end{align}
    where
    \begin{align}
      \bm{t}(\bm{u})&=(t_1(\bm{u}_1),\dots,t_{d_0}(\bm{u}_{d_0}))\T,\notag\\
      \var{b}{d_0}{\bm{d},k}(\bm{t}(\bm{u}))&=\sum_{\bm{j}\in\var{\mathcal{Q}}{d_0}{\bm{d},k}}\prod_{s=1}^{d_0}a_{s,d_sj_s}(t_s(\bm{u}_s)),\label{bk}\\
      t(\bm{u})&=\psiis{0}(C(\bm{u})),\notag
    \end{align}
    with $\bm{d}=(d_1,\dots,d_{d_0})\T$ and
    \begin{align*}
      \var{\mathcal{Q}}{d_0}{\bm{d},k}=\biggl\{\bm{j}\in\IN^{d_0}:\sum_{s=1}^{d_0}j_s=k,\ j_s\le d_s,\ s\in\{1,\dots,d_0\}\biggr\};
    \end{align*}
    that is, $\var{b}{d_0}{\bm{d},k}$ is a coefficient in the Cauchy product of the polynomials $\sum_{k=1}^{d_s}a_{s,d_sk}(t)(-v_0)^k$.
  \end{enumerate}
\end{theorem}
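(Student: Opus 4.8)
The plan is to obtain \ref{main.theorem.1} from a single application of Fa\`a di Bruno's formula and then to feed it into the mixture representation (\ref{dnac}), after which \ref{main.theorem.2} reduces to a Cauchy product together with the defining property of the Laplace--Stieltjes transform. For \ref{main.theorem.1}, I would read the inner generator (\ref{psi0s}) as a composition $\psi_{0s}(t;v_0)=(f\circ g)(t)$ with $g=\cpsi_{0s}$ and $f(x)=\exp(-v_0x)$, and apply (\ref{FdB.Bell}). Since $f^{(k)}(x)=(-v_0)^k\exp(-v_0x)$, evaluating at $g(t)$ pulls the common factor $(-v_0)^k\exp(-v_0\cpsi_{0s}(t))=(-v_0)^k\psi_{0s}(t;v_0)$ out of the $k$th summand and leaves $B_{n,k}(\cpsi_{0s}^\prime(t),\dots,\cpsi_{0s}^{(n-k+1)}(t))$ behind, which is exactly (\ref{psi0sder}) with $a_{s,nk}$ as in (\ref{as.nk}). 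The sign statement $\sign(a_{s,nk}(t))=(-1)^{n-k}$ then follows directly from Lemma \ref{lem.Bell}\ref{lem.Bell.4}, whose hypothesis that $\cpsi_{0s}^\prime$ be completely monotone is precisely the standing assumption of the theorem; and in the degenerate case $\psi_s=\psi_0$ the node $\cpsi_{0s}=\psiis{0}\circ\psi_0$ is the identity, so $\cpsi_{0s}^\prime\equiv1$ and $a_{s,11}=B_{1,1}(1)=1$.

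For \ref{main.theorem.2}, I would substitute (\ref{psi0sder}) into each of the $d_0$ factors $\psis{0s}{(d_s)}(t_s(\bm{u}_s);v_0)$ in (\ref{dnac}). The product of the exponential prefactors collapses to a single exponential,
\begin{align*}
  \prod_{s=1}^{d_0}\psi_{0s}(t_s(\bm{u}_s);v_0)=\exp\biggl(-v_0\sum_{s=1}^{d_0}\cpsi_{0s}(t_s(\bm{u}_s))\biggr)=\exp(-v_0\,t(\bm{u})),
\end{align*}
where I use the identity $\sum_{s=1}^{d_0}\cpsi_{0s}(t_s(\bm{u}_s))=\psiis{0}(C(\bm{u}))=t(\bm{u})$, which is immediate from (\ref{nac}) and (\ref{eq:AC}). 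The remaining factor is the product of the $d_0$ polynomials $\sum_{k=1}^{d_s}a_{s,d_sk}(t_s(\bm{u}_s))(-v_0)^k$ in the variable $-v_0$; collecting the coefficient of $(-v_0)^k$ in this Cauchy product gives exactly $\var{b}{d_0}{\bm{d},k}(\bm{t}(\bm{u}))$ summed over $\var{\mathcal{Q}}{d_0}{\bm{d},k}$ as in (\ref{bk}).

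It then remains to integrate against $F_0$. Interchanging the finite sum over $k$ with the integral and using that $\psi_0$ is the Laplace--Stieltjes transform of $F_0$, so that $\psis{0}{(k)}(t)=\int_0^\infty(-v_0)^k\exp(-v_0t)\,dF_0(v_0)$ by differentiating under the integral sign, each term $\int_0^\infty(-v_0)^k\exp(-v_0\,t(\bm{u}))\,dF_0(v_0)$ equals $\psis{0}{(k)}(t(\bm{u}))$; reinstating the outer product $\prod_{s=1}^{d_0}\prod_{j=1}^{d_s}(\psiis{s})^\prime(u_{sj})$ from (\ref{dnac}) then yields (\ref{dnacop}). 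I expect the only genuinely technical point to be the combinatorial bookkeeping of the Cauchy product: one must verify that the coefficient of $(-v_0)^k$ is indexed precisely by $\var{\mathcal{Q}}{d_0}{\bm{d},k}$, with the constraints $j_s\ge1$ (each factor polynomial starts at degree $1$) and $j_s\le d_s$ (each has degree $d_s$), and that consequently the power $k$ runs from $d_0$ to $d=\sum_{s=1}^{d_0}d_s$. This is elementary but must be done with care.
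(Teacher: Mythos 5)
Your proposal is correct and follows essentially the same route as the paper's own proof: Fa\`a di Bruno's formula (\ref{FdB.Bell}) with $f(x)=\exp(-v_0x)$, $g=\cpsi_{0s}$ and Lemma \ref{lem.Bell}\ref{lem.Bell.4} for Part \ref{main.theorem.1}, then substitution into (\ref{dnac}), collapse of the exponential prefactors to $\exp(-v_0\,t(\bm{u}))$, the Cauchy product yielding the coefficients (\ref{bk}), and integration against $F_0$ via $\IE[(-V_0)^k\exp(-V_0t)]=\psis{0}{(k)}(t)$ for Part \ref{main.theorem.2}. The only cosmetic difference is that you justify $a_{s,11}=1$ by noting the node is the identity, whereas the paper cites Lemma \ref{lem.Bell}\ref{lem.Bell.2}; both are fine.
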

\begin{remark}\label{rem.der}
  \myskip
  \begin{enumerate}[label=(\arabic*),leftmargin=*,align=left,topsep=\mytopsep,itemsep=\myitemsep]
  \item We see from Theorem \ref{main.theorem} Part \ref{main.theorem.1} that
    all derivatives of $\psi_{0s}(t;v_0)$ are of similar form in $v_0$, namely
    $\psi_{0s}(t;v_0)$ times a polynomial in $-v_0$ where the coefficients
    $a_{s,d_sk}(t_s(\bm{u}_s))$ are the Bell polynomials evaluated at the
    derivatives of the nodes $\cpsi_{0s}$. This structure is crucial for solving
    Challenge 3 since it allows one to compute the expectation with respect to
    $F_0$ explicitly.
  \item We see from Theorem \ref{main.theorem} Part \ref{main.theorem.2} how the (log-)density can in general be evaluated. It involves the sign-adjusted derivatives of $\psi_0$ which are known in many cases; see \cite{hofertmaechlermcneil2012a}. Furthermore, the quantities $\var{b}{d_0}{\bm{d},k}$, $k\in\{d_0,\dots,d\}$, have to be computed. The remaining parts are comparably trivial to obtain.
  \item If there are \emph{degenerate} child copulas, that is, there exists a subset $\mathcal{S}$ of indices such that $d_s=1$ for all $s\in\mathcal{S}$, then a straightforward application of Theorem \ref{main.theorem} \ref{main.theorem.1} shows that
  \begin{align*}
       c(\bm{u})=\biggl(\,\sum_{k=d_0^\prime}^{d-d_{\mathcal{S}}}\var{b}{d_0^\prime}{\bm{d}^\prime,k}(\bm{t}(\bm{u}))\psis{0}{(k+d_{\mathcal{S}})}(t(\bm{u}))\biggr)\cdot\prod_{s=1}^{d_0}\prod_{j=1}^{d_s}(\psiis{s})^\prime(u_{sj}),
  \end{align*}
  where $d_{\mathcal{S}}= \sum_{s \in \mathcal{S}} d_s = \vert \mathcal{S} \vert$, $d_0^\prime = d_0-d_{\mathcal{S}}$ and $\bm{d}^\prime$ is the vector containing all the dimensions $d_s$ for $s\notin\mathcal{S}$.
  \end{enumerate}
\end{remark}

\section{Example families and transformations}\label{sec.ex}
\subsection{Tilted outer power families, Clayton and Gumbel copulas}
In order to construct and sample new nested Archimedean copulas it turns out to be useful to consider certain generator transformations; see \cite{hofert2010c} for more details. One such transformation leads to \emph{tilted outer power generators}
\begin{align}
  \psi_s(t)=\psi((c^{\theta_s}+t)^{1/\theta_s}-c),\label{top.gen}
\end{align}
for a generator $\psi\in\Psi_\infty$, $c\in[0,\infty)$, $\theta_s\in[1,\infty)$,
and $s\in\{0,\dots,d_0\}$. Note that generators of this form are elements of
$\Psi_\infty$. %
It follows from Equation (\ref{FdB.Bell}) and Proposition \ref{Bell.id} Part \ref{Bell.id.1} (with $x=1/\theta_0$ and $y=c^{\theta_0}+t$) that the derivatives of $\psi_0$ are
\begin{align}
  \psis{0}{(n)}(t)=\sum_{k=1}^n\psi^{(k)}((c^{\theta_0}+t)^{1/\theta_0}-c)(c^{\theta_0}+t)^{k/\theta_0-n}s_{nk}(1/\theta_0)\label{psi0dertop}
\end{align}

For nesting generators of Type (\ref{top.gen}), the nodes are given by
\begin{align*}
  \cpsi_{0s}(t)=(c^{\theta_s}+t)^{\alpha_s}-c^{\theta_0},\quad\alpha_s=\theta_0/\theta_s.
\end{align*}
This implies that tilted outer power generators of Type (\ref{top.gen}) fulfill the sufficient nesting condition if $\theta_0\le\theta_s$. Furthermore,
\begin{align*}
  \cpsi_{0s}^{(k)}(t)=(\alpha_s)_k(c^{\theta_s}+t)^{\alpha_s-k},\ k\in\IN.
\end{align*}
By Proposition \ref{Bell.id} Part \ref{Bell.id.1} (with $x=\alpha_s$ and $y=c^{\theta_s}+t$), this implies that
\begin{align*}
  a_{s,nk}(t)&=B_{n,k}(\cpsi_{0s}^\prime(t),\dots,\cpsi_{0s}^{(n-k+1)}(t))=(c^{\theta_s}+t)^{\alpha_sk-n}s_{nk}(\alpha_s).
\end{align*}
By Theorem \ref{main.theorem} Part \ref{main.theorem.1}, this implies that the inner generator
\begin{align*}
  \psi_{0s}(t;v_0)=\exp\bigl(-v_0((c^{\theta_s}+t)^{\alpha_s}-c^{\theta_0})\bigr),
\end{align*}
has derivatives
\begin{align}
  \psi_{0s}^{(n)}(t;v_0)&=\psi_{0s}(t;v_0)\sum_{k=1}^na_{s,nk}(t)(-v_0)^k=\psi_{0s}(t;v_0)\sum_{k=1}^n(c^{\theta_s}+t)^{\alpha_sk-n}s_{nk}(\alpha_s)(-v_0)^k.\label{psi0sder.top}
\end{align}

Note that $(\psiis{s})^\prime(u)=\theta_s(\psii)^\prime(u)(c+\psii(u))^{\theta_s-1}$.
By Equation (\ref{psi0dertop}), Theorem \ref{main.theorem} Part \ref{main.theorem.2}, and slight simplifications, we thus obtain
\begin{align}
  c(\bm{u})&=\biggl(\,\sum_{k=d_0}^d\var{b}{d_0}{\bm{d},k}(\bm{t}(\bm{u}))\biggl(\,\sum_{j=1}^k\psi^{(j)}\bigl((c^{\theta_0}+t(\bm{u}))^{1/\theta_0}-c\bigr)(c^{\theta_0}+t(\bm{u}))^{j/\theta_0-k}s_{kj}(1/\theta_0)\biggr)\biggr)\notag\\
&\phantom{={}}\phantom{={}} \cdot\prod_{s=1}^{d_0}\theta_s^{d_s}\prod_{j=1}^{d_s}(\psii)^\prime(u_{sj})(c+\psii(u_{sj}))^{\theta_s-1}.\label{c.top}
\end{align}

\begin{remark}[Clayton and Gumbel copulas]
  \myskip
  \begin{enumerate}[label=(\arabic*),leftmargin=*,align=left,topsep=\mytopsep,itemsep=\myitemsep]
  \item By taking $\psi(t)=1/(1+t)$ and $c=1$ we see that the tilted outer power generator (\ref{top.gen}) is $\psi_s(t)=(1+t)^{-1/\theta_s}$, that is, a generator of the Clayton family. As a special case of this section, we thus obtain the inner generator derivatives and the densities of nested Clayton copulas. Concerning the former, we obtain from (\ref{psi0sder.top}) that
    \begin{align*}
      \psi_{0s}^{(n)}(t;v_0)=\psi_{0s}(t;v_0)\sum_{k=1}^ns_{nk}(\alpha_s)(1+t)^{\alpha_sk-n}(-v_0)^k.
    \end{align*}
    Concerning the latter, plugging in the corresponding quantities in (\ref{c.top}) and simplifying the terms (in particular, the power of $1+\psiis{0}(C(\bm{u}))$ can be taken out of the inner sum), we obtain
    \begin{align*}
      c(\bm{u})&=\biggl(\,\sum_{k=d_0}^d(-1)^{d-k}\var{b}{d_0}{\bm{d},k}(\bm{t}(\bm{u}))(1+t(\bm{u}))^{-(k+1/\theta_0)}\sum_{j=1}^{k}(-1)^{k-j}s_{kj}(1/\theta_0)\biggr)\\
      &\phantom{={}}\cdot\prod_{s=1}^{d_0}\theta_s^{d_s}\biggl(\,\prod_{j=1}^{d_s}u_{sj}\biggr)^{-(1+\theta_s)}.
    \end{align*}
    By Proposition \ref{Bell.id} Part \ref{Bell.id.3}, we can further simplify this expression and obtain
    \begin{align*}
      c(\bm{u})&=(-1)^d\biggl(\,\sum_{k=d_0}^d\var{b}{d_0}{\bm{d},k}(\bm{t}(\bm{u}))(-1/\theta_0)_k(1+t(\bm{u}))^{-(k+1/\theta_0)}\biggr)\\
&\phantom{={}}\cdot\prod_{s=1}^{d_0}\theta_s^{d_s}\biggl(\,\prod_{j=1}^{d_s}u_{sj}\biggr)^{-(1+\theta_s)}
    \end{align*}
    for the density of nested Clayton copulas of Type (\ref{nac}). This formula also follows directly from Theorem \ref{main.theorem} Part \ref{main.theorem.2} by plugging in the generator derivatives $\psis{0}{(k)}(t)=(-1/\theta_0)_k(1+t)^{-(k+1/\theta_0)}$ and simplifying the expressions.
  \item Interestingly, also the inner generator derivatives and the densities of nested Gumbel copulas of Type (\ref{nac}) follow as special case of nested tilted outer power families. To see this take $\psi(t)=\exp(-t)$ (the generator of the independence copula) and consider a zero tilt (so $c=0$). It follows from (\ref{psi0sder.top}) that
    \begin{align*}
      \psi_{0s}^{(n)}(t;v_0)=\psi_{0s}(t;v_0)\sum_{k=1}^ns_{nk}(\alpha_s)t^{\alpha_sk-n}(-v_0)^k.
    \end{align*}
    Concerning the density, a short calculation shows that
    \begin{align*}
      c(\bm{u})&=(-1)^d\frac{C(\bm{u})}{\Pi(\bm{u})}\biggl(\,\sum_{k=d_0}^d\var{b}{d_0}{\bm{d},k}(\bm{t}(\bm{u}))\biggl(\,\sum_{j=1}^{k}(-t(\bm{u})^{1/\theta_0})^js_{kj}(1/\theta_0)\biggr)\biggr)\\
&\phantom{={}}\cdot\prod_{s=1}^{d_0}\theta_s^{d_s}\biggl(\,\prod_{j=1}^{d_s}-\log u_{sj}\biggr)^{\theta_s-1},
    \end{align*}
    where $C$ is (\ref{nac}) and $\Pi$ is the independence copula (hence the product of its arguments). As before, this result can also be directly obtained from \ref{main.theorem} Part \ref{main.theorem.2} based on Gumbel's generator derivatives $\psis{0}{(k)}(t)=(\psi_0(t)/t^k)\sum_{j=1}^ks_{kj}(1/\theta_0)(-1/\theta_0)^j$ as derived in \cite{hofertmaechlermcneil2012a}.
  \end{enumerate}
\end{remark}

\subsection{Ali--Mikhail--Haq copulas}
A nested Archimedean copula of Type (\ref{nac}) with all components $C_s$, $s\in\{0,\dots,d_0\}$, belonging to the Ali--Mikhail--Haq family is a valid copula according to the sufficient nesting condition if $\theta_0\le\theta_s$ for all $s\in\{1,\dots,d_0\}$. The generator $\psi_{0s}(t;v_0)$ is given by
\begin{align*}
 \psi_{0s}(t;v_0)=\biggl(\frac{1-\theta_s}{(1-\theta_0)\exp(t)-(\theta_s-\theta_0)}\biggr)^{v_0}=\biggl(\frac{1-\theta_{0s}}{\exp(t)-\theta_{0s}}\biggr)^{v_0},
\end{align*}
where $\theta_{0s}=(\theta_s-\theta_0)/(1-\theta_0)\in[0,1)$ and $v_0\in\IN$.
It can be shown from (\ref{FdB.Bell}), Lemma \ref{lem.Bell} \ref{lem.Bell.2},
and (\ref{Stirling.first}) that
\begin{align}
\psis{0s}{(n)}(t;v_0)%
&=\psi_{0s}(t;v_0)\sum_{k=1}^{n}s_{nk}\bigl(1/(1-\theta_{0s}\exp(-t))\bigr)(-v_0)^k\label{psi0sder.AMH},
\end{align}
which reveals that $a_{s,nk}(t)=s_{nk}\bigl(1/(1-\theta_{0s}\exp(-t))\bigr)$ in (\ref{psi0sder}).

\cite{hofertmaechlermcneil2012a} showed that
\begin{align*}
  \psis{0}{(k)}(t)=(-1)^k\frac{1-\theta_0}{\theta_0}\sideset{}{_{-k}}\Li(\theta_0\exp(-t)),\ t\in(0,\infty),\ k\in\IN_0,
\end{align*}
where $\sideset{}{_{s}}\Li(z)$ denotes the \emph{polylogarithm of order $s$ at
  $z$}. It follows from Theorem \ref{main.theorem} Part \ref{main.theorem.2} that
\begin{align*}
  c(\bm{u})&=(-1)^d\frac{1-\theta_0}{\theta_0}\biggl(\,\sum_{k=d_0}^d\var{b}{d_0}{\bm{d},k}(\bm{t}(\bm{u}))(-1)^k\sideset{}{_{-k}}\Li\bigl(\theta_0\exp(-t(\bm{u}))\bigr)\biggr)\\
&\phantom{={}}\cdot\prod_{s=1}^{d_0}(1-\theta_s)^{d_s}\biggl(\,\prod_{j=1}^{d_s}u_{sj}(1-\theta_s(1-u_{sj}))\biggr)^{-1}.
\end{align*}

\subsection{Joe copula}
\subsubsection*{The inner generator and its derivatives}
Nested Joe copulas of Type (\ref{nac}) are valid copulas if $\theta_0\le\theta_s$ for all $s\in\{1,\dots,d_0\}$. The generator $\psi_{0s}(t;v_0)$ is given by
\begin{align}
  \psi_{0s}(t;v_0)=(1-(1-\exp(-t))^{\alpha_s})^{v_0},\label{psi0s.Joe}
\end{align}
where $\alpha_s=\theta_0/\theta_s$, $s\in\{1,\dots,d_0\}$, and $v_0\in\IN$.
A rather lengthy calculations shows that
\begin{align*}
  \psis{0s}{(n)}(t;v_0)&=\psi_{0s}(t;v_0)(-1)^{n}\sum_{m=1}^{n}S(n,m)\biggl(-\frac{e^{-t}}{1-e^{-t}}\biggr)^m\sum_{k=1}^mv_0^k\sum_{l=k}^ms(l,k)s_{ml}(\alpha_s)\biggl(\frac{x}{1+x}\biggr)^l\\
  &=\psi_{0s}(t;v_0)\sum_{k=1}^na_{s,nk}(t)(-v_0)^k
\end{align*}
for
\begin{align*}
  a_{s,nk}(t)%
  &=\biggl((-1)^{n-k}\sum_{m=k}^{n}S(n,m)\biggl(-\frac{\exp(-t)}{1-\exp(-t)}\biggr)^m\sum_{l=k}^ms(l,k)s_{ml}(\alpha_s)\biggl(\frac{\var{\psi}{\text{J}}{1/\alpha_s}(t)-1}{\var{\psi}{\text{J}}{1/\alpha_s}(t)}\biggr)^l\biggr),
\end{align*}
where $\var{\psi}{\text{J}}{1/\alpha_s}(t)=1-(1-\exp(-t))^{\alpha_s}$ denotes Joe's generator with parameter $1/\alpha_s=\theta_s/\theta_0$. This is precisely the form as given in (\ref{psi0sder}).

It follows from \cite{hofertmaechlermcneil2012a} that
\begin{align*}
  \psis{0}{(k)}(t)=(-1)^k\frac{(1-\exp(-t))^{1/\theta_0}}{\theta_0}\var{P}{\text{J}}{k,\theta_0}\biggl(\frac{\exp(-t)}{1-\exp(-t)}\biggr),\ t\in(0,\infty),\ n\in\IN,
\end{align*}
where $\var{P}{\text{J}}{k,\theta_0}(x)=\sum_{l=1}^kS(k,l)(l-1-1/\theta_0)_{l-1}x^l$. We obtain from Theorem \ref{main.theorem} Part \ref{main.theorem.2} that
\begin{align*}
  c(\bm{u})&=\frac{(-1)^d}{\theta_0}\bigr(1-\exp(-t(\bm{u}))\bigr)^{1/\theta_0}\biggl(\,\sum_{k=d_0}^d(-1)^k\var{b}{d_0}{\bm{d},k}(\bm{t}(\bm{u}))\var{P}{\text{J}}{k,\theta_0}\biggl(\frac{\exp(t(\bm{u}))}{1-\exp(-t(\bm{u}))}\biggr)\biggr)\\
&\phantom{={}}\cdot\prod_{s=1}^{d_0}\theta_s^{d_s}\prod_{j=1}^{d_s}\frac{(1-u_{sj})^{\theta_s-1}}{1-(1-u_{sj})^{\theta_s}}.
\end{align*}
Note that $\exp(-t(\bm{u}))=\prod_{s=1}^{d_0}\bigl(1-(1-C_s(\bm{u}_s))^{\theta_0}\bigr)$.

\subsection{Frank Copula}
Nested Frank copulas of Type (\ref{nac}) are valid copulas according to the sufficient nesting condition if $\theta_0\le\theta_s$ for all $s\in\{1,\dots,d_0\}$. The generator $\psi_{0s}(t;v_0)$ is given by
\begin{align*}
  \psi_{0s}(t;v_0)=\biggl(\frac{1-(1-p_s\exp(-t))^{\alpha_s}}{p_0}\biggr)^{v_0},
\end{align*}
where $\alpha_s=\theta_0/\theta_s$, $p_j=1-e^{-\theta_j}$, $j\in\{0,s\}$, and $v_0\in\IN$. Note that this inner generator is a shifted (and appropriately scaled) inner Joe generator, that is,
\begin{align*}
  \psi_{0s}(t;v_0)=\frac{\var{\psi}{\text{J}}{0s}(h+t;v_0)}{\var{\psi}{\text{J}}{0s}(h;v_0)},
\end{align*}
where $h=-\log p_s$; see \cite[p.\ 104]{hofert2010c} for more details about such generators. In particular, with the representation for the generator derivatives for the inner Joe generator, this implies that
\begin{align*}
  \psis{0s}{(n)}(t;v_0)&=\frac{\var{\psi}{\text{J}}{0s}^{(n)}(h+t;v_0)}{\var{\psi}{\text{J}}{0s}(h;v_0)}=\frac{\var{\psi}{\text{J}}{0s}(h+t;v_0)}{\var{\psi}{\text{J}}{0s}(h;v_0)}\sum_{k=1}^n\var{a}{\text{J}}{s,nk}(t+h)(-v_0)^k\\
  &=\psi_{0s}(t;v_0)\sum_{k=1}^n\var{a}{\text{J}}{s,nk}(t+h)(-v_0)^k
\end{align*}
and thus that $a_{s,nk}(t)=\var{a}{\text{J}}{s,nk}(t+h)$, that is, the coefficients of the polynomial in $-v_0$ for the derivatives of the inner Frank generator are the ones of the inner Joe generator, appropriately shifted.

It follows from \cite{hofertmaechlermcneil2012a} that
\begin{align*}
  \psis{0}{(k)}(t)=(-1)^k\frac{1}{\theta_0}\sideset{}{_{-(k-1)}}\Li(p_0\exp(-t)),\ t\in(0,\infty),\ k\in\IN_0.
\end{align*}
Theorem \ref{main.theorem} Part \ref{main.theorem.2} then implies that
\begin{align*}
  c(\bm{u})&=(-1)^d\biggl(\,\sum_{k=d_0}^d\var{b}{d_0}{\bm{d},k}(\bm{t}(\bm{u}))(-1)^k\sideset{}{_{-(k-1)}}\Li\bigl(p_0\exp(-t(\bm{u}))\bigr)\biggr)\\
&\phantom{={}}\cdot\prod_{s=1}^{d_0}\theta_s^{d_s}\prod_{j=1}^{d_s}\frac{\exp(-\theta_su_{sj})}{1-\exp(-\theta_su_{sj})}.
\end{align*}

\subsection{A nested Ali--Mikhail--Haq $\circ$ Clayton copula}
If $\psi_0$ is the generator of an Ali--Mikhail--Haq copula and $\psi_s$, $s\in\{1,\dots,d_0\}$, generate Clayton copulas, then \cite[p.\ 115]{hofert2010c} showed that the sufficient nesting condition holds if $\theta_s\in[1,\infty)$, $s\in\{1,\dots,d_0\}$, so one can build nested Archimedean copulas of Type (\ref{nac}) with the root copula $C_0$ being of Ali--Mikhail--Haq and the child copulas $C_s$ being of Clayton type under this condition (referred to as Ali--Mikhail--Haq $\circ$ Clayton copulas). In this case, a short calculation shows that
\begin{align*}
  \psi_{0s}(t;v_0)=\psi((1+t)^{1/\theta_s}-1)
\end{align*}
for $\psi(t)=(1+(1-\theta_0)t)^{-v_0}$. We can thus apply (\ref{psi0dertop}) with $c=1$ to see that
\begin{align}
  \psis{0s}{(n)}(t;v_0)=\sum_{j=1}^n\psi^{(j)}((1+t)^{1/\theta_s}-1)(1+t)^{j/\theta_s-n}s_{nj}(1/\theta_s),\label{psi0sder.AMH.C}
\end{align}
where
\begin{align*}
  \psi^{(j)}((1+t)^{1/\theta_s}-1)=(1-\theta_0)^j\psi_{0s}(t;v_0)\psi_{0s}(t;j)\sum_{k=1}^js(j,k)(-v_0)^k.
\end{align*}
Plugging this result into (\ref{psi0sder.AMH.C}) and interchanging the order of the two summations, we obtain
\begin{align*}
  \psis{0s}{(n)}(t;v_0)%
  &=\psi_{0s}(t;v_0)\sum_{k=1}^n\biggl(\,\sum_{j=k}^{n}s(j,k)s_{nj}(1/\theta_s)\psi_{0s}(t;j)(1-\theta_0)^j(1+t)^{j/\theta_s-n}\biggr)(-v_0)^k,
\end{align*}
which provides the structure of the coefficients $a_{s,nk}$ in (\ref{psi0sder}), namely,
\begin{align*}
  a_{s,nk}(t)=\sum_{j=k}^{n}s(j,k)s_{nj}(1/\theta_s)\biggl(\frac{1-\theta_0}{\theta_0+(1-\theta_0)(1+t)^{1/\theta_s}}\biggr)^j(1+t)^{j/\theta_s-n}.
\end{align*}

It is clear from Theorem \ref{main.theorem} Part \ref{main.theorem.2} that the density for the nested Ali--Mikhail--Haq $\circ$ Clayton copula basically consists of the corresponding pieces of the Ali--Mikhail--Haq and the Clayton density we have already seen earlier. It is given by
\begin{align*}
  c(\bm{u})%
&=(-1)^d\frac{1-\theta_0}{\theta_0}\biggl(\,\sum_{k=d_0}^d\var{b}{d_0}{\bm{d},k}(\bm{t}(\bm{u}))(-1)^k\sideset{}{_{-k}}\Li\bigl(\theta_0\exp(-t(\bm{u}))\bigr)\biggr)\prod_{s=1}^{d_0}\theta_s^{d_s}\biggl(\,\prod_{j=1}^{d_s}u_{sj}\biggr)^{-(1+\theta_s)}.
\end{align*}
Note that $\displaystyle\exp(t(\bm{u}))=\prod_{s=1}^{d_0}\frac{C_s(\bm{u}_s)}{1-\theta_0(1-C_s(\bm{u}_s))}$.

\section{Numerical evaluation}\label{sec.num}
\subsection{The log-density}
In statistical applications one typically aims at computing the
log-density. From a numerical point of view, this is typically not as trivial as
computing the density and taking the logarithm afterwards. Often, the density
can not be computed without running into numerical problems, hence taking the
logarithm of the density faces the same problem. However, an intelligent
implementation of the log-density is possible (and often even required), see the implementation in the \textsf{R} package \texttt{copula}.

We now briefly explain how one can efficiently compute the log-density of a nested Archimedean copula of Type (\ref{nac}). Recall from (\ref{dnacop}) that
\begin{align*}
  c(\bm{u})=\biggl(\,\sum_{k=d_0}^d\var{b}{d_0}{\bm{d},k}(\bm{t}(\bm{u}))\psis{0}{(k)}(t(\bm{u}))\biggr)\cdot\prod_{s=1}^{d_0}\prod_{j=1}^{d_s}(\psiis{s})^\prime(u_{sj}).
\end{align*}
Let us first think about the signs of the terms $\var{b}{d_0}{\bm{d},k}(\bm{t}(\bm{u}))$, $k\in\{d_0,\dots,d\}$. By Theorem \ref{main.theorem} \ref{main.theorem.1} we know that $\sign\bigl(a_{s,d_sj_s}(t_s(\bm{u}_s))\bigr)=(-1)^{d_s-j_s}$, thus
\begin{align*}
  \sign\prod_{s=1}^{d_0}a_{s,d_sj_s}(t_s(\bm{u}_s))=(-1)^{\sum\limits_{s=1}^{d_0}d_s-\sum\limits_{s=1}^{d_0}j_s}.
\end{align*}
Recall from (\ref{bk}) the structure of $\var{b}{d_0}{\bm{d},k}(\bm{t}(\bm{u}))$, which is the sum in $\bm{j}\in\var{\mathcal{Q}}{d_0}{\bm{d},k}$ over $\prod_{s=1}^{d_0}a_{s,d_sj_s}(t_s(\bm{u}_s))$. For such $\bm{j}$, it follows from the definition of $\var{\mathcal{Q}}{d_0}{\bm{d},k}$ that $\sum_{s=1}^{d_0}j_s=k$. Furthermore, note that $\sum_{s=1}^{d_0}d_s=d$, hence
\begin{align*}
  \sign\var{b}{d_0}{\bm{d},k}(\bm{t}(\bm{u}))=(-1)^{d-k}.
\end{align*}
This implies that
\begin{align}
  c(\bm{u})=\biggl(\,\sum_{k=d_0}^d(-1)^{d-k}\var{b}{d_0}{\bm{d},k}(\bm{t}(\bm{u}))(-1)^k\psis{0}{(k)}(t(\bm{u}))\biggr)\cdot\prod_{s=1}^{d_0}\prod_{j=1}^{d_s}(-\psiis{s})^\prime(u_{sj})\label{dnacop.}
\end{align}
where we note that $\prod_{s=1}^{d_0}\prod_{k=1}^{d_s}(-1)=(-1)^d$.

We see from (\ref{dnacop.}) that all appearing quantities are positive which is quite convenient for computing the log-density
\begin{align*}
  \log c(\bm{u})=\log\biggl(\,\sum_{k=d_0}^d(-1)^{d-k}\var{b}{d_0}{\bm{d},k}(\bm{t}(\bm{u}))(-1)^k\psis{0}{(k)}(t(\bm{u}))\biggr)+\sum_{s=1}^{d_0}\sum_{j=1}^{d_s}\log(-\psiis{s})^\prime(u_{sj}).
\end{align*}
Since the latter double sum is typically trivial to compute, let us focus on the first sum. To compute the (intelligent) logarithm of this sum, let
\begin{align*}
  x_k=\log\bigl((-1)^{d-k}\var{b}{d_0}{\bm{d},k}(\bm{t}(\bm{u}))\bigr)+\log\bigl((-1)^k\psis{0}{(k)}(t(\bm{u}))\bigr),\ k\in\{d_0,\dots,d\},
\end{align*}
and note that
\begin{align*}
 \log\sum_{k=d_0}^d(-1)^{d-k}\var{b}{d_0}{\bm{d},k}(\bm{t}(\bm{u}))(-1)^k\psis{0}{(k)}(t(\bm{u}))&=\log\sum_{k=d_0}^{d}\exp(x_k)\\
&=x_{\text{max}}+\log\sum_{k=d_0}^{d}\exp(x_k-x_{\text{max}}),
\end{align*}
where $x_{\text{max}}=\max_{d_0\le k\le d}x_k$. Since all summands in the latter sum are in $(0,1]$, the corresponding logarithm can easily be computed. It remains to discuss how the $x_k$, $k\in\{d_0,\dots,d\}$, can be computed.

For computing the $x_k$, $k\in\{d_0,\dots,d\}$, efficient implementations for the functions $\log((-1)^k\psis{0}{(k)}(t))$ in the \texttt{R} package \texttt{copula} can be used. Computing the quantities $\log\bigl((-1)^{d-k}\var{b}{d_0}{\bm{d},k}(\bm{t}(\bm{u}))\bigr)$ is more challenging. Recall from (\ref{bk}) that
\begin{align*}
  \var{b}{d_0}{\bm{d},k}(\bm{t}(\bm{u}))=\sum_{\bm{j}\in\var{\mathcal{Q}}{d_0}{\bm{d},k}}\prod_{s=1}^{d_0}a_{s,d_sj_s}(t_s(\bm{u}_s)),
\end{align*}
where $a_{s,d_sj_s}(t_s(\bm{u}_s))$ is given in (\ref{as.nk}). For computing the function $s_{d_sj_s}$ that often appears in $a_{s,d_sj_s}(t_s(\bm{u}_s))$, the function \texttt{coeffG} in \texttt{copula} can be used; to be more precise, \lstinline!(-1)^(!$d_s$\lstinline!-!$j_s$\lstinline!)!\,\lstinline!* copula:::coeffG(!$d_s$\lstinline!,x)! computes $s_{d_sj_s}(\texttt{x})$. For the summation over the set $\var{\mathcal{Q}}{d_0}{\bm{d},k}$, the \textsf{R} package \texttt{partitions} provides the function \texttt{blockparts}. With \lstinline!blockparts(!$\bm{d}$\lstinline!-rep(1L,!$d_0$\lstinline!),!$k$\lstinline!-!$d_0$\lstinline!)+1L! one can then obtain a matrix with $d_0$ rows where each column gives one $\bm{j}\in\var{\mathcal{Q}}{d_0}{\bm{d},k}$.

\subsection{The -log-likelihood of two-parameter nested Gumbel copulas}
In this section, we compute the -log-likelihood (based on a sample of size $n=100$) of two nested Gumbel copulas with parameters $\theta_0$ and $\theta_1$ such that Kendall's tau equals $0.25$ and $0.5$, respectively. In order to be able to provide graphical insights, we focus on two-parameter nested copulas of the form
\begin{align}
  C(\bm{u})=C_0(u_1,C_1(u_2,\dots,u_{d}))\label{fnac}
\end{align}
where $d\in\{3,10\}$.

Note that we obtain nested Archimedean copulas of Type (\ref{fnac}) from (\ref{nac}) by artificially thinking of $u_1$ as a child copula $\psi_0(\psiis{0}(u_1))$ of dimension 1, that is, as a degenerate child copula. For such $s$, note that $a_{s,d_sj_s}(t_s(\bm{u}_s))=a_{s,11}(t_s(\bm{u}_s))$ and $t_s(\bm{u}_s)$ equals $\psiis{0}$ at the corresponding (one-dimensional) argument, which is $u_1$ in (\ref{fnac}). It follows from the last statement in Theorem \ref{main.theorem} \ref{main.theorem.1} that $a_{s,d_sj_s}(t_s(\bm{u}_s))=1$ for degenerate children. This implies that these terms drop out of the product in (\ref{bk}). The set $\var{\mathcal{Q}}{d_0}{\bm{d},k}$ shrinks accordingly since $1\le j_s\le d_s=1$ for degenerate children $s$.

Figure~\ref{fig.logL.G} displays the -log-likelihoods as level plots for the
nested Gumbel copulas as described above based on a sample of size $n=100$ (note
the restriction $\theta_0\le\theta_1$). The parameters from which the samples
were drawn and the minima based on the grid points as displayed in the wireframe
plots are included. It is interesting to see the behavior of the -log-likelihood
in the child parameter $\theta_1$ when the dimension of the child copula $C_1$
is increased (with $C_0$ and its dimension fixed). As can be seen from the
plots, the -log-likelihood is easier to minimize in $\theta_1$-direction. This
behavior was already observed by \cite{hofertmaechlermcneil2012a} for
Archimedean copulas and can be expressed by the empirical observation that the
mean squared error behaves like $1/(nd)$ which is decreasing in $d$ for fixed
$n$. To see a similar behavior here for the -log-likelihoods of the nested
Gumbel copulas is not surprising since the marginal copula for $u_1=1$ is the
Archimedean copula $C_1$. Finally, let us remark that the optimization procedure
used to generated Figure~\ref{fig.logL.G} can also be found in the \textsf{R} package
\texttt{copula} as a demo. This can directly be used for fitting a two-level
nested Archimedean copula to a real life data set. Furthermore, similar figures
as Figure~\ref{fig.logL.G} are provided in the demo for a nested Clayton copula.
\begin{figure}[htbp]
  \centering
  \includegraphics[width=0.5\textwidth]{./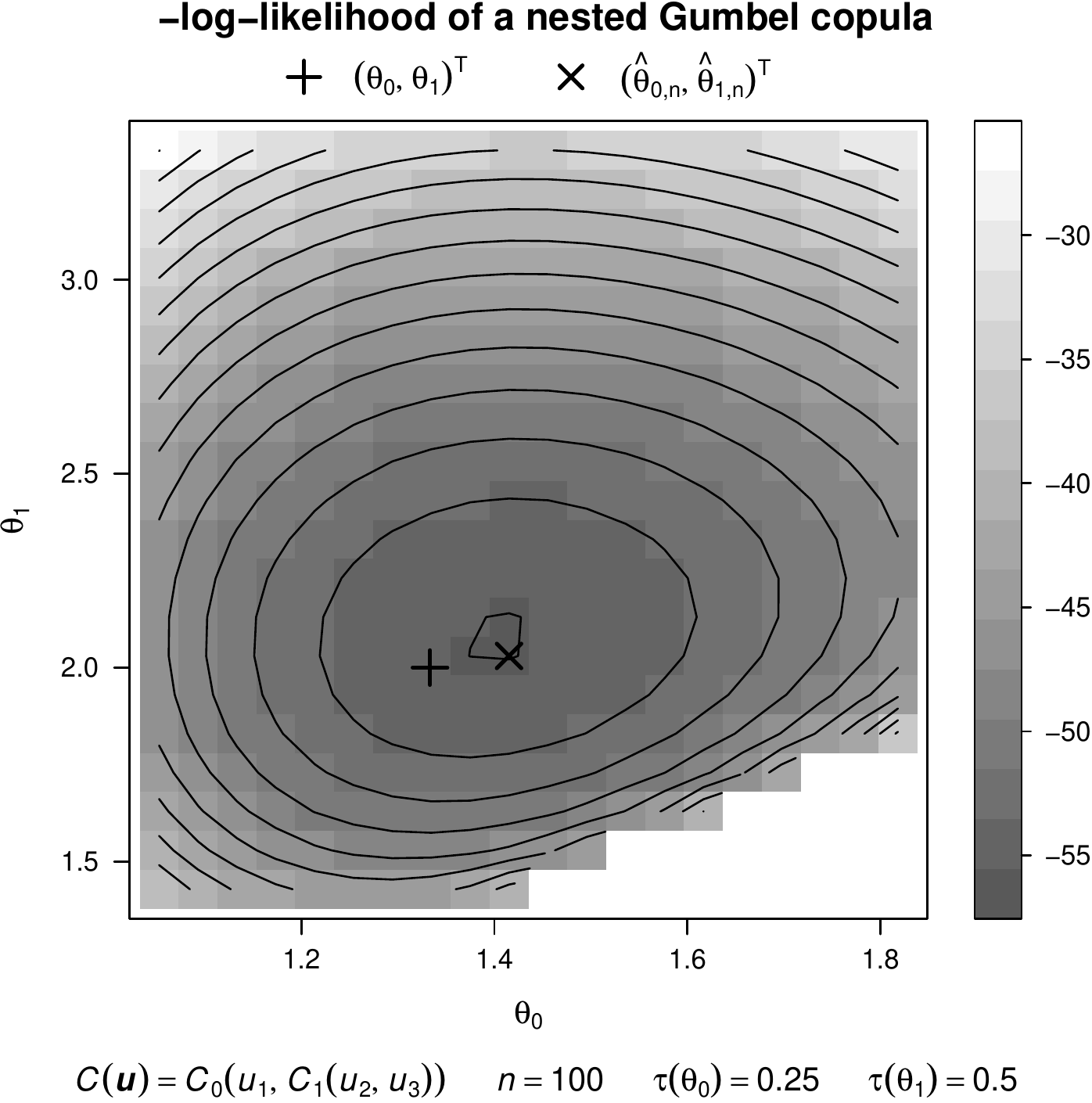}%
  \hfill
  \includegraphics[width=0.5\textwidth]{./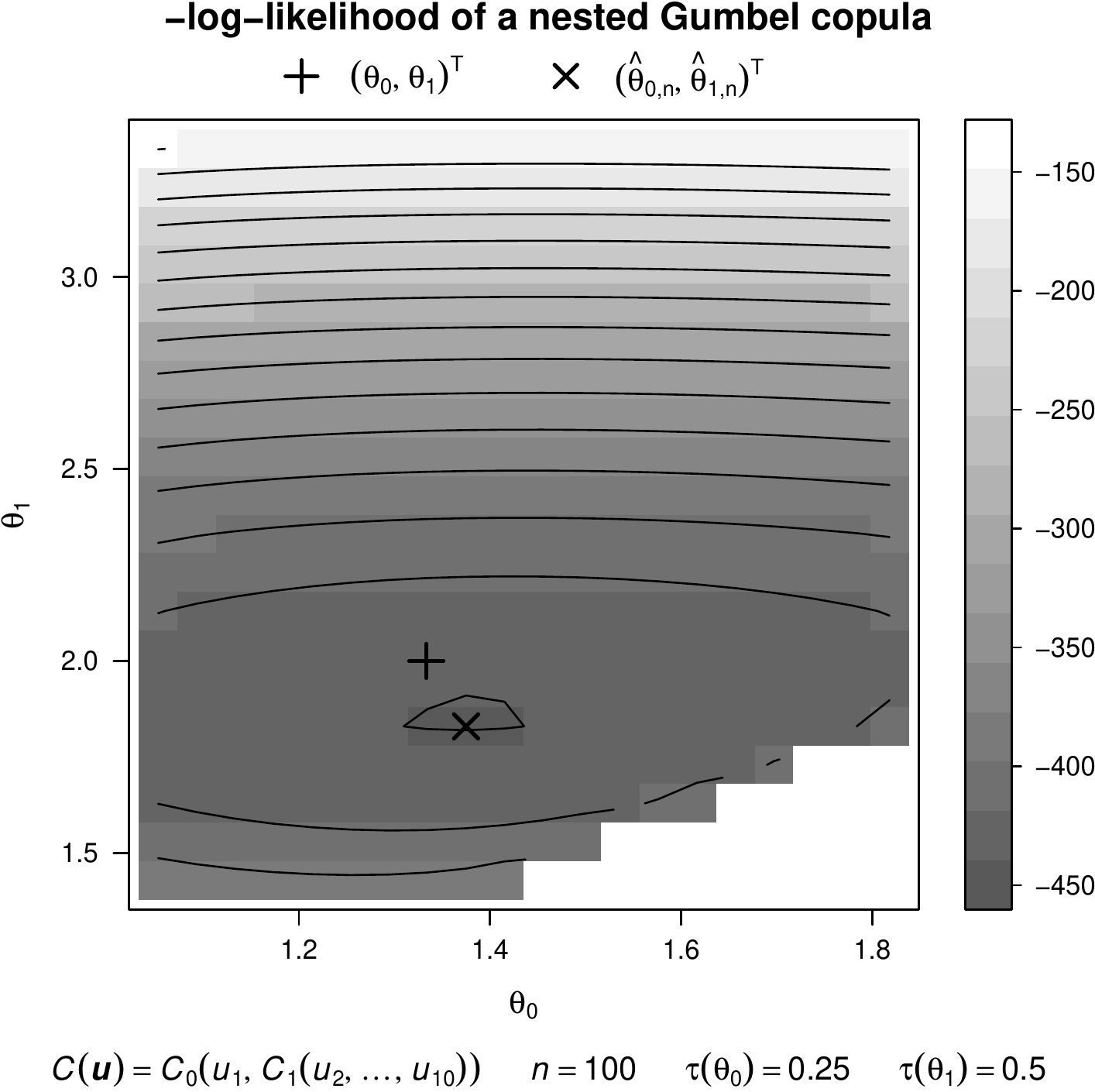}%
  \setcapwidth{\textwidth}%
  \caption{Wireframe plots of the -log-likelihood of a three-dimensional (left)
    and ten-dimensionsl (right) nested Gumbel copula $C(\bm{u})=C_0(u_1,C_1(u_2,u_3))$ with parameters $\theta_0=4/3$ (Kendall's tau equals $0.25$) and $\theta_1=2$ (Kendall's tau equals $0.5$) based on a sample of size $n=100$.}
  \label{fig.logL.G}
\end{figure}

\section{Densities for three- (and higher-) level nested Archimedean copulas}\label{sec.ext}
In this section, a density formula analogous to (\ref{dnacop}) is derived for
three-level nested Archimedean copulas and extensions to higher nesting levels
are briefly addressed.

When working with three or more nesting levels, it turns
out to be convenient to (slightly) change the notation used in the previous sections. Consider a three-level nested Archimedean copula of the form
\begin{align}
  C(\bm{u})=C_1\bigl(C_{11}(C_{111}(\bm{u}_{111}),\dots,C_{11d_{11}}(\bm{u}_{11d_{d11}})),\dots,C_{1d_1}(C_{1d_11}(\bm{u}_{1d_11}),\dots,C_{1d_1d_{1d_1}}(\bm{u}_{1d_1d_{1d_1}}))\bigr),\label{nAC3}
\end{align}
where $\bm{u}_{s_1s_2s_3}=(u_{s_1s_2s_31},\dots,u_{s_1s_2s_3d_{s_1s_2s_3}})\T$
denotes the argument of $C_{s_1s_2s_3}$ (the copula generated by
$\psi_{s_1s_2s_3}$), $d_{s_1s_2s_3}$ denotes the dimension of $C_{s_1s_2s_3}$,
and $d_{s_1s_2}$ denotes the dimension of $C_{s_1s_2}$ (the copula generated by
$\psi_{s_1s_2}$). Here and in the
following, $s_1$ always equals 1, $s_2\in\{1,\dots,d_{s_1}\}$, and
$s_3\in\{1,\dots,d_{s_1s_2}\}$. Note that it is convenient to think of
(\ref{nAC3}) as a tree; see Figure~\ref{fig.nAC3.tree}. Furthermore, let
\begin{align*}
  t_{s_1s_2s_3}(\bm{u}_{s_1s_2s_3})&=\sum_{s_4=1}^{d_{s_1s_2s_3}}\psiis{s_1s_2s_3}(u_{s_1s_2s_3s_4})=\psi_{s_1s_2s_3}^{-1}(C_{s_1s_2s_3}(\bm{u}_{s_1s_2s_3})),\\
\bm{u}_{s_1s_2}&=(\bm{u}_{s_1s_21}\T,\dots,\bm{u}_{s_1s_2d_{s_1s_2}}\T)\T,\\
  \bm{t}_{s_1s_2}(\bm{u}_{s_1s_2})&=(t_{s_1s_21}(\bm{u}_{s_1s_21}),\dots,t_{s_1s_2d_{s_1s_2}}(\bm{u}_{s_1s_2d_{s_1s_2}}))\T,\\
  C_{s_1s_2}^\ast(\bm{u}_{s_1s_2})&=C_{s_1s_2}(C_{s_1s_21}(\bm{u}_{s_1s_21}),\dots,C_{s_1s_2d_{s_1s_2}}(\bm{u}_{s_1s_2d_{s_1s_2}}))\\
&=\psi_{s_1s_2}\biggl(\,\sum_{s_3=1}^{d_{s_1s_2}}
  \cpsi_{s_1s_2,s_1s_2s_3}\bigl(t_{s_1s_2s_3}(\bm{u}_{s_1s_2s_3})\bigr)\biggr),\\
  t^\ast_{s_1s_2}(\bm{u}_{s_1s_2})&=\psiis{s_1s_2}(C_{s_1s_2}^\ast(\bm{u}_{s_1s_2}))=\sum_{s_3=1}^{d_{s_1s_2}}
  \cpsi_{s_1s_2,s_1s_2s_3}\bigl(t_{s_1s_2s_3}(\bm{u}_{s_1s_2s_3})\bigr),
\end{align*}
where
\begin{align*}
  \cpsi_{s_1s_2,s_1s_2s_3}=\psi_{s_1s_2}^{-1} \circ \psi_{s_1s_2s_3}
\end{align*}
and $C_{s_1s_2}^\ast$ denotes the (marginal) nested Archimedean
copula with root $C_{s_1s_2}$. Note that the dimension of the root copula
$C_{s_1s_2}$ of the nested Archimedean copula $C_{s_1s_2}^\ast$ is $d_{s_1s_2}$
which is in general not equal to the dimension
$d_{s_1s_2\cdot}=\sum_{s_3=1}^{d_{s_1s_2}}d_{s_1s_2s_3}$ of
$C_{s_1s_2}^\ast$. Furthermore, the root copula $C_1$ ($=C_{s_1}$) of the nested
Archimedean copula $C$ has $d_1$ ($=d_{s_1}$) arguments, the $s_2$th of which
has $d_{s_1s_2}$-many arguments. Overall, $d_{s_1\cdot\cdot}=\sum_{s_2=1}^{d_{s_1}}
\sum_{s_3=1}^{d_{s_1s_2}} d_{s_1s_2s_3}$ equals $d$, the dimension of $C$.

In order to compute the density $c$ of $C$, we use a similar idea as in Section
\ref{sec.basic}.

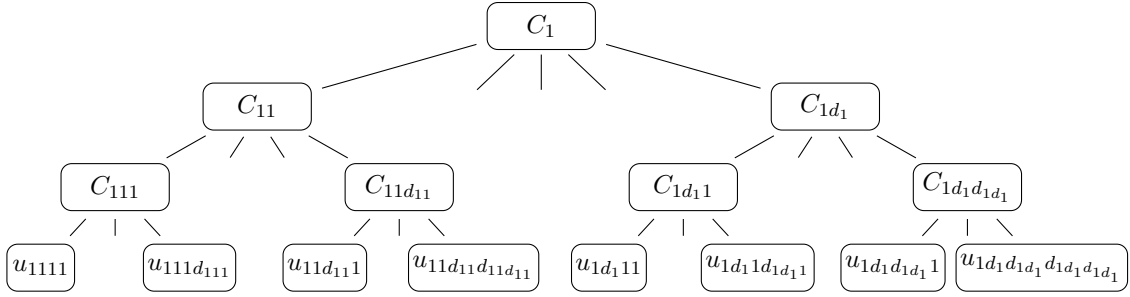
\begin{figure}[htbp]
  \centering
  \begin{tikzpicture}[
    transform shape, scale=0.89,
    grow=south,
    level 1/.style={sibling distance=84mm,level distance=12mm},
    level 2/.style={sibling distance=42mm,level distance=12mm},
    level 3/.style={sibling distance=22mm,level distance=12mm},
    edge from parent/.style={draw,shorten >=1.4mm,shorten <=1.4mm},
    every node/.style={draw,rectangle,rounded corners,inner sep=0.6mm},
    mynodestyle/.style={minimum width=16mm,minimum height=7mm},
    myleafstyle/.style={minimum width=10mm,minimum height=7mm},
    mydrawstyle/.style={dotted,thick}
    ]
    \node[mynodestyle]{$C_1$}
    child{
      node[mynodestyle]{$C_{11}$}{
        child{
          node[mynodestyle]{$C_{111}$}{
            child{
              node[myleafstyle]{$u_{1111}$}
            }
            child{
              node[myleafstyle]{$u_{111d_{111}}$}
            }
          }
        }
        child{
          node[mynodestyle]{$C_{11d_{11}}$}{
            child{
              node[myleafstyle]{$u_{11d_{11}1}$}
            }
            child{
              node[myleafstyle]{$u_{11d_{11}d_{11d_{11}}}$}
            }
          }
        }
      }
    }
    child{
      node[mynodestyle]{$C_{1d_1}$}{
        child{
          node[mynodestyle]{$C_{1d_11}$}{
            child{
              node[myleafstyle]{$u_{1d_111}$}
            }
            child{
              node[myleafstyle]{$u_{1d_11d_{1d_11}}$}
            }
          }
        }
        child{
          node[mynodestyle]{$C_{1d_1d_{1d_1}}$}{
            child{
              node[myleafstyle]{$u_{1d_1d_{1d_1}1}$}
            }
            child{
              node[myleafstyle]{$u_{1d_1d_{1d_1}d_{1d_1d_{1d_1}}}$}
            }
          }
        }
      }
    };
    \draw(-4mm,-4.25mm)--(-9.5mm,-9.5mm);%
    \draw(0mm,-4.25mm)--(0mm,-9.5mm);%
    \draw(4mm,-4.25mm)--(9.5mm,-9.5mm);%
    \draw(-44mm,-16.5mm)--(-46mm,-19.6mm);%
    \draw(-40mm,-16.5mm)--(-38mm,-19.6mm);%
    \draw(40mm,-16.5mm)--(38mm,-19.6mm);%
    \draw(44mm,-16.5mm)--(46mm,-19.6mm);%
    \draw(-63mm,-28.75mm)--(-63mm,-31.25mm);%
    \draw(-21mm,-28.75mm)--(-21mm,-31.25mm);%
    \draw(21mm,-28.75mm)--(21mm,-31.25mm);%
    \draw(63mm,-28.75mm)--(63mm,-31.25mm);%
  \end{tikzpicture}
  \setcapwidth{\textwidth}%
  \caption{Tree structure (some arguments are omitted) for a three-level nested Archimedean copula of Type (\ref{nAC3}).}
  \label{fig.nAC3.tree}
\end{figure}

By replacing $\psi_1\text{(}\!=\psi_{s_1}\text{)}=\LS[F_1]$ with the corresponding integral, we obtain
\begin{align*}
  C(\bm{u}) & =\int_0^\infty\prod_{s_2=1}^{d_1}\exp\bigl(-v_1\psiis{s_1}(C_{s_1s_2}^\ast(\bm{u}_{s_1s_2}))\bigr)\,dF_1(v_1) \\
  & =\int_0^\infty\prod_{s_2=1}^{d_1}\psi_{s_1,s_1s_2}(t_{s_1s_2}^\ast(\bm{u}_{s_1s_2});v_1)\,dF_1(v_1)
\end{align*}
where
\begin{align*}
 \psi_{s_1,s_1s_2}(t;v_1) = \exp\bigl(-v_1 \cpsi_{s_1,s_1s_2}(t)\bigr)
\end{align*}
and thus
\begin{align}
  c(\bm{u})=\int_0^\infty\prod_{s_2=1}^{d_1}\frac{\partial}{\partial\bm{u}_{s_1s_2}}\psi_{s_1,s_1s_2}(t_{s_1s_2}^\ast(\bm{u}_{s_1s_2});v_1)\,dF_1(v_1),\label{dens3l}
\end{align}
where $\frac{\partial}{\partial\bm{u}_{s_1s_2}}$ denotes the derivative with respect to all components of $\bm{u}_{s_1s_2}$ (which is a vector of length $d_{s_1s_2\cdot}$).

Similar as in Section \ref{sec.basic}, we observe the following key challenges:
\begin{enumerate}[label=\sffamily\bfseries Challenge \arabic*\ ,leftmargin=*,align=left,topsep=\mytopsep,itemsep=\myitemsep]
\item Find the derivatives in the integrand;
\item Compute their product;
\item Integrate it with respect to the mixture distribution function $F_1=\LSi[\psi_1]$.
\end{enumerate}

We will first solve Challenge 1 by considering a multivariate version of Fa\`a di
Bruno's formula. For suitable functions $f:\IR\to\IR$ and $g:\IR^n\to\IR$, it follows from
\cite{hardy2006} that
\begin{align}
  \frac{\partial}{\partial\bm{x}}f(g(\bm{x}))=\sum_{k=1}^nf^{(k)}(g(\bm{x}))\sum_{\pi:\lvert\pi\rvert=k}\prod_{B\in\pi}\frac{\partial^{\lvert
      B\rvert}}{\prod_{i\in B}\partial x_i}g(\bm{x}),\ \bm{x}=(x_1,\dots,x_n)\T,\label{FdBMulti}
\end{align}
where the last sum extends over all partitions $\pi$ of $\{1,\dots,n\}$ with $k$
elements and
the last product over all blocks $B$ of $\pi$. Observe that,  if $x_1 = \dots  =  x_n = x$, then the univariate Fa\`a di Bruno's formula (\ref{FdB.Bell}) can be restated as
\begin{align*}
(f \circ g)^{(n)}(x) = \sum_{k=1}^n f^{(k)}(g(x)) \sum_{\pi:\vert \pi \vert = k} \prod_{B \in \pi} g^{(\vert B \vert)}(x),
\end{align*}
where $\pi$ is a partition of $\{1,\dots, n \}$. Comparing this identity with (\ref{FdB.Bell}) yields
\begin{align}\label{BellCombin}
B_{n,k}(g^\prime(x), \dots , g^{(n-k+1)}(x)) =  \sum_{\pi:\vert \pi \vert = k} \prod_{B \in \pi} g^{(\vert B \vert)}(x).
\end{align}
This will be used in the following lemma, which is a special case of
(\ref{FdBMulti}) with stronger assumptions on the function $g$. It will then
lead us to a solution for Challenge 1 by choosing suitable functions $f$ and $g$.
\begin{lemma}\label{FdbMultiStrong}
Suppose there exists a partition $\{B_1,\dots,B_m\}$
of $\{1,\dots,n\}$ with
$\lvert B_l \rvert = d_l$ for $l\in \{ 1,\dots, m\}$ (with $\sum_{l=1}^md_l=n$), such that for any indices $k_1 \in B_i$ and $k_2 \in B_j$, for $i,j \in \{ 1,\dots, m \}$ with $i\neq j$, the partial derivative of $g(x_1,\dots,x_n)$ with respect to $x_{k_1}$  and $x_{k_2}$ equals zero, that is
\begin{align}
 \frac{\partial^2  }{\partial x_{k_1} \partial x_{k_2}} \, g(\bm{x})= 0,\quad\text{for all}\ k_1\in B_i,\ k_2 \in B_j,\ i,j\in\{1,\dots,m\},\ i\neq j. \label{FdBMulti.hyp1}
\end{align}
Moreover, suppose that for any $l \in \{ 1,\dots, m \}$ and any subset $B$ of $B_l$, there exist functions $h_{l1}$ and $h_{l2}$ such that
\begin{align}
  \frac{\partial^{\lvert B\rvert}}{\prod_{i\in B}\partial x_i}g(\bm{x}) = h_{l1}^{(\lvert B \rvert)}(h_{l2}(\bm{x})) \prod_{i\in B} \frac{\partial}{\partial x_i} h_{l2}(\bm{x}).\label{h1h2}
\end{align}

Then one has, for any suitable function $f$,
\begin{align*}
\frac{\partial}{\partial\bm{x}}f(g(\bm{x}))&=\biggl(\,\prod_{l=1}^m \prod_{i \in B_l} \frac{\partial}{\partial x_i} h_{l2}(\bm{x}) \biggr)  \\
& \phantom{{}={}} \cdot
 \sum_{k=1}^n f^{(k)}(g(\bm{x})) \sum_{\bm{j} \in \var{\mathcal{Q}}{m}{\bm{d},k}} \prod_{l=1}^m  B_{d_l, j_l}\bigl(h_{l1}^\prime (h_{l2}(\bm{x})), \dots , h_{l1}^{(d_l-j_l+1)}(h_{l2}(\bm{x}))\bigr),
\end{align*}
where $\var{\mathcal{Q}}{m}{\bm{d},k}$ is defined as in Theorem
\ref{main.theorem} \ref{main.theorem.2} and $\bm{d}=(d_1,\dots,d_m)\T$.
\end{lemma}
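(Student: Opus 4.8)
The plan is to start from the multivariate Fa\`a di Bruno formula (\ref{FdBMulti}) and use the two structural hypotheses to collapse it into the claimed product form. Recall that (\ref{FdBMulti}) expresses $\frac{\partial}{\partial\bm{x}}f(g(\bm{x}))$ as a sum over all partitions $\pi$ of $\{1,\dots,n\}$, organized by the number of blocks $k=\lvert\pi\rvert$, of the products $\prod_{B\in\pi}\frac{\partial^{\lvert B\rvert}}{\prod_{i\in B}\partial x_i}g(\bm{x})$. My first step is to argue that hypothesis (\ref{FdBMulti.hyp1}) forces most of these partition terms to vanish: if some block $B\in\pi$ contains indices $k_1\in B_i$ and $k_2\in B_j$ with $i\neq j$, then, writing the block derivative as an iterated derivative containing the factor $\frac{\partial^2}{\partial x_{k_1}\partial x_{k_2}}g$, differentiating the identically-vanishing cross derivative yields $\frac{\partial^{\lvert B\rvert}}{\prod_{i\in B}\partial x_i}g(\bm{x})=0$ (this is where sufficient smoothness of $g$, allowing the interchange of partial derivatives, is needed). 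Consequently only partitions $\pi$ that refine $\{B_1,\dots,B_m\}$---i.e. each block of $\pi$ lies entirely inside a single $B_l$---contribute.

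Second, I would decompose each surviving $\pi$ into the induced partitions $\pi_l$ of the blocks $B_l$, so that specifying a refining $\pi$ with $\lvert\pi\rvert=k$ is equivalent to independently choosing a partition $\pi_l$ of the $d_l$-element set $B_l$ for each $l$, subject to $\sum_{l=1}^m\lvert\pi_l\rvert=k$. Setting $j_l=\lvert\pi_l\rvert$ gives $1\le j_l\le d_l$ and $\sum_l j_l=k$, that is $\bm{j}\in\var{\mathcal{Q}}{m}{\bm{d},k}$. Next I apply hypothesis (\ref{h1h2}) to each block $B\in\pi_l$, replacing its derivative by $h_{l1}^{(\lvert B\rvert)}(h_{l2}(\bm{x}))\prod_{i\in B}\frac{\partial}{\partial x_i}h_{l2}(\bm{x})$. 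Collecting the $\frac{\partial}{\partial x_i}h_{l2}$ factors over all blocks of $\pi_l$ reconstitutes $\prod_{i\in B_l}\frac{\partial}{\partial x_i}h_{l2}(\bm{x})$, since the blocks of $\pi_l$ partition $B_l$; crucially this product is independent of the particular partition chosen, so the total factor $\prod_{l=1}^m\prod_{i\in B_l}\frac{\partial}{\partial x_i}h_{l2}(\bm{x})$ pulls out of all the partition sums and becomes the claimed prefactor.

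Third, the residual $\pi$-dependent factor is $\prod_{l=1}^m\prod_{B\in\pi_l}h_{l1}^{(\lvert B\rvert)}(h_{l2}(\bm{x}))$. Because the choices of the $\pi_l$ are independent, summing over all refining $\pi$ with $\lvert\pi\rvert=k$ factorizes as $\sum_{\bm{j}\in\var{\mathcal{Q}}{m}{\bm{d},k}}\prod_{l=1}^m\bigl(\sum_{\pi_l:\lvert\pi_l\rvert=j_l}\prod_{B\in\pi_l}h_{l1}^{(\lvert B\rvert)}(h_{l2}(\bm{x}))\bigr)$. Finally I invoke the combinatorial identity (\ref{BellCombin}) with $n=d_l$, $k=j_l$, and the function $h_{l1}$ evaluated at the fixed point $h_{l2}(\bm{x})$ to recognize each inner sum as $B_{d_l,j_l}\bigl(h_{l1}^\prime(h_{l2}(\bm{x})),\dots,h_{l1}^{(d_l-j_l+1)}(h_{l2}(\bm{x}))\bigr)$; reassembling with the pulled-out prefactor and the outer sum over $k$ gives the stated formula.

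The main obstacle I anticipate is the combinatorial bookkeeping in the vanishing/refinement step and the subsequent regrouping: one must verify both that every non-refining partition really drops out (which rests on the cross-derivative hypothesis propagating from order two to all higher orders via differentiation, hence on smoothness) and that the correspondence between refining partitions with $k$ blocks and independent tuples $(\pi_1,\dots,\pi_m)$ with $\sum_l\lvert\pi_l\rvert=k$ is a bijection, so that the sum genuinely factorizes over $l$. Once these two combinatorial facts are in place, the algebra of substituting (\ref{h1h2}) and applying (\ref{BellCombin}) is routine.
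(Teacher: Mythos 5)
Your proposal is correct and follows essentially the same route as the paper's own proof: kill the non-refining partitions via hypothesis (\ref{FdBMulti.hyp1}), reindex the surviving ones by tuples $\bm{j}\in\var{\mathcal{Q}}{m}{\bm{d},k}$ of induced partitions of the $B_l$, substitute (\ref{h1h2}), pull out the partition-independent prefactor $\prod_{l}\prod_{i\in B_l}\frac{\partial}{\partial x_i}h_{l2}(\bm{x})$, and identify the inner sums as Bell polynomials via (\ref{BellCombin}). Your treatment is in fact slightly more careful than the paper's, since you make explicit both the smoothness needed for the vanishing of higher-order mixed derivatives and the bijection underlying the factorization, points the paper passes over in a single sentence.
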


We are now in the position to solve Challenge 1. By applying Lemma
\ref{FdbMultiStrong} with $\bm{x} = \bm{u}_{s_1s_2}$, $n = d_{s_1s_2\cdot}$, $m =
d_{s_1s_2}$, $B_l=\{s_1s_2l1,\dots, s_1s_2ld_{s_1s_2l}\}$ (slightly abusing the notation), $\bm{d}_{s_1s_2} =
(d_{s_1s_21},\dots, d_{s_1s_2d_{s_1s_2}})\T$,  $f(t) = \psi_{s_1,s_1s_2}(t; v_1)$, $g(\bm{x})=t_{s_1s_2}^\ast(\bm{u}_{s_1s_2})$,
$h_{l1}(t)  = \cpsi_{s_1s_2,s_1s_2l}(t)$, and $h_{l2}(\bm{u})  =
t_{s_1s_2l}(\bm{u}_{s_1s_2l}) = \sum_{s_4=1}^{d_{s_1s_2l}} \psi^{-1}_{s_1s_2l}(u_{s_1s_2ls_4})$, the derivative in the integrand in (\ref{dens3l}) is given by
\begin{align*}
    &\phantom{{}={}}\frac{\partial}{\partial\bm{u}_{s_1s_2}}\psi_{s_1,s_1s_2}(t_{s_1s_2}^\ast(\bm{u}_{s_1s_2});v_1)\\
    &  = \biggl(\,\prod_{s_3=1}^{d_{s_1s_2}} \prod_{s_4=1}^{d_{s_1s_2s_3}} (\psi_{s_1s_2s_3}^{-1})^\prime(u_{s_1s_2s_3s_4})\biggr) \sum_{l=1}^{d_{s_1s_2\cdot}} \psi_{s_1,s_1s_2}^{(l)}(t_{s_1s_2}^\ast(\bm{u}_{s_1s_2});v_1) \\
        & \phantom{{}={}}\cdot\biggl(\, \sum_{\bm{j} \in \mathcal{Q}^{d_{s_1s_2}}_{\bm{d}_{s_1s_2},l}}
    \prod_{s_3=1}^{d_{s_1s_2}} B_{d_{s_1s_2s_3},j_{s_3}}\bigl
    ((\cpsi_{s_1s_2,s_1s_2s_3}^{(k)}(t_{s_1s_2s_3}(\bm{u}_{s_1s_2s_3})))_{k\in\{ 1,\dots, d_{s_1s_2s_3}- j_{s_3}+1\} }\bigr)\biggr)
  \end{align*}
  By applying Theorem \ref{main.theorem} \ref{main.theorem.1}, this derivative can be written as
  \begin{align*}
  &\phantom{{}={}}\frac{\partial}{\partial\bm{u}_{s_1s_2}}\psi_{s_1,s_1s_2}(t_{s_1s_2}^\ast(\bm{u}_{s_1s_2});v_1)\\
  &=\biggl(\,\prod_{s_3=1}^{d_{s_1s_2}}\prod_{s_4=1}^{d_{s_1s_2s_3}}(\psi_{s_1s_2s_3}^{-1})^\prime(u_{s_1s_2s_3s_4})\biggr)\psi_{s_1,s_2s_2}(t_{s_1s_2}^\ast(\bm{u}_{s_1s_2});v_1)\\
  &\phantom{{}={}}\cdot\sum_{l=1}^{d_{s_1s_2\cdot}}\sum_{k=1}^l a_{s_1s_2,lk}(t_{s_1s_2}^\ast(\bm{u}_{s_1s_2}))(-v_1)^k\\
        & \phantom{{}={}}\cdot\biggl(\, \sum_{\bm{j} \in \mathcal{Q}^{d_{s_1s_2}}_{\bm{d}_{s_1s_2},l}}
    \prod_{s_3=1}^{d_{s_1s_2}} B_{d_{s_1s_2s_3},j_{s_3}}\bigl
    ((\cpsi_{s_1s_2,s_1s_2s_3}^{(k)}(t_{s_1s_2s_3}(\bm{u}_{s_1s_2s_3})))_{k\in\{ 1,\dots, d_{s_1s_2s_3}- j_{s_3}+1\} }\bigr)\biggr)
    \end{align*}
  Interchanging the order of summation in the double sum yields
  \begin{align*}
  &\phantom{{}={}}\frac{\partial}{\partial\bm{u}_{s_1s_2}}\psi_{s_1,s_1s_2}(t_{s_1s_2}^\ast(\bm{u}_{s_1s_2});v_1)\\
  &=\biggl(\,\prod_{s_3=1}^{d_{s_1s_2}}\prod_{s_4=1}^{d_{s_1s_2s_3}}(\psi_{s_1s_2s_3}^{-1})^\prime(u_{s_1s_2s_3s_4})\biggr)\psi_{s_1,s_2s_2}(t_{s_1s_2}^\ast(\bm{u}_{s_1s_2});v_1)\\
  &\phantom{{}={}}\cdot\sum_{k=1}^{d_{s_1s_2\cdot}}(-v_1)^k\sum_{l=k}^{d_{s_1s_2\cdot}}\biggl(a_{s_1s_2,lk}(t_{s_1s_2}^\ast(\bm{u}_{s_1s_2}))\\
   & \phantom{{}={}}\cdot\biggl(\, \sum_{\bm{j} \in \mathcal{Q}^{d_{s_1s_2}}_{\bm{d}_{s_1s_2},l}}
    \prod_{s_3=1}^{d_{s_1s_2}} B_{d_{s_1s_2s_3},j_{s_3}}\bigl
    ((\cpsi_{s_1s_2,s_1s_2s_3}^{(k)}(t_{s_1s_2s_3}(\bm{u}_{s_1s_2s_3})))_{k\in\{ 1,\dots, d_{s_1s_2s_3}- j_{s_3}+1\} }\bigr)\biggr)\biggr)  \end{align*}
  which can be written as
  \begin{align}
  &\phantom{{}={}}\frac{\partial}{\partial\bm{u}_{s_1s_2}}\psi_{s_1,s_1s_2}(t_{s_1s_2}^\ast(\bm{u}_{s_1s_2});v_1)\notag\\
  &=\biggl(\,\prod_{s_3=1}^{d_{s_1s_2}}\prod_{s_4=1}^{d_{s_1s_2s_3}}(\psi_{s_1s_2s_3}^{-1})^\prime(u_{s_1s_2s_3s_4})\biggr)\psi_{s_1,s_2s_2}(t_{s_1s_2}^\ast(\bm{u}_{s_1s_2});v_1) \notag \\
  &\phantom{{}={}} \cdot \sum_{k=1}^{d_{s_1s_2\cdot}} a_{s_1s_2,d_{s_1s_2\cdot}k}(\bm{t}_{s_1s_2}(\bm{u}_{s_1s_2}))(-v_1)^k,\label{integrand.der}
  \end{align}
  for
  \begin{align}
    &\phantom{{}={}}a_{s_1s_2,d_{s_1s_2\cdot}k}(\bm{t}_{s_1s_2}(\bm{u}_{s_1s_2}))\notag\\
&=\sum_{l=k}^{d_{s_1s_2\cdot}}\biggl(a_{s_1s_2,lk}(t_{s_1s_2}^\ast(\bm{u}_{s_1s_2}))\notag\\
&\phantom{{}={}}\cdot\biggl(\,\sum_{\bm{j} \in \mathcal{Q}^{d_{s_1s_2}}_{\bm{d}_{s_1s_2},l}}
    \prod_{s_3=1}^{d_{s_1s_2}} B_{d_{s_1s_2s_3},j_{s_3}}\bigl
    ((\cpsi_{s_1s_2,s_1s_2s_3}^{(k)}(t_{s_1s_2s_3}(\bm{u}_{s_1s_2s_3})))_{k\in\{ 1,\dots, d_{s_1s_2s_3}- j_{s_3}+1\} }\bigr)\biggr)\biggr).\label{as.nk.2}
  \end{align}
  With this notation, the connection with Theorem \ref{main.theorem}
  \ref{main.theorem.1} is clearly visible.

  Finally, in order to solve Challenge 2 and 3, one introduces
  \begin{align}
    b^{d_{s_1}}_{\bm{d}_{s_1},k}(\bm{t}(\bm{u})) =  \sum_{\bm{j} \in \mathcal{Q}_{\bm{d}_{s_1},k}^{d_1}} \prod_{s_2=1}^{d_{s_1}}a_{s_1s_2,d_{s_1s_2\cdot}j_{s_2}}(\bm{t}_{s_1s_2}(\bm{u}_{s_1s_2})),\label{bk.2}
  \end{align}
  with
  \begin{align*}
    \bm{t}(\bm{u})&=(\bm{t}_1(\bm{u}_1)\T,\dots,\bm{t}_{d_1}(\bm{u}_{d_1})\T)\T,\\
    \bm{t}_{s_1}(\bm{u}_{s_1})&=(\bm{t}_{s_11}(\bm{u}_{s_11})\T,\dots,\bm{t}_{s_1d_{s_1}}(\bm{u}_{s_1d_{s_1}})\T)\T,\\
    \bm{d}_{s_1}&=(\bm{d}_{s_11}\T,\dots,\bm{d}_{s_1d_{s_1}}\T)\T.
  \end{align*}
  Similarly to Theorem \ref{main.theorem} \ref{main.theorem.2}, one then obtains (with
  $t(\bm{u})=\psiis{1}(C(\bm{u}))$ as before)
  \begin{align}
  c(\bm{u}) & = \int_0^\infty \prod_{s_2=1}^{d_{s_1}}  \frac{\partial}{\partial \bm{u}_{s_1s_2}} \, \psi_{s_1,s_1s_2}(t_{s_1s_2}^\ast(\bm{u}_{s_1s_2}); v_1) \, dF_1(v_1) \notag \\
  & =   \biggl(\,\prod_{s_2=1}^{d_{s_1}}  \prod_{s_3=1}^{d_{s_1s_2}}
  \prod_{s_4=1}^{d_{s_1s_2s_3}}( \psi_{s_1s_2s_3}^{-1})^\prime(u_{s_1s_2s_3s_4})\biggr)
  \sum_{k=d_{s_1}}^d b^{d_{s_1}}_{\bm{d}_{s_1},k}(\bm{t}(\bm{u}))\notag \\
  & \phantom{={}} \cdot \int_0^\infty \biggl(\,\prod_{s_2=1}^{d_{s_1}}
  \psi_{s_1,s_1s_2}(t_{s_1s_2}^{\ast}(\bm{u}_{s_1s_2});v_1) \biggr)  (-v_1)^k dF_1(v_1) \notag \\
  & = \biggl(\,\prod_{s_2=1}^{d_{s_1}}  \prod_{s_3=1}^{d_{s_1s_2}}
  \prod_{s_4=1}^{d_{s_1s_2s_3}}( \psi_{s_1s_2s_3}^{-1})^\prime(u_{s_1s_2s_3s_4})\biggr) \sum_{k=d_{s_1}}^d b^{d_{s_1}}_{\bm{d}_{s_1},k}(\bm{t}(\bm{u}))\,\psi_1^{(k)}(t(\bm{u})). \label{dnac3lvl}
\end{align}

\begin{remark}
  The pattern to compute the density of nested Archimedean copulas with more
  than three levels can be deduced from the previous computations, the following
  heuristic argument shows how. In order to understand the reasoning, it is
  useful to remind ourselves that the structure of nested Archimedean copulas
  can be depicted by trees; see Figure~\ref{fig.nAC3.tree} for a tree
  representation of (\ref{nAC3}). Let $L$ denote the number of levels (with
  (\ref{nAC3}) having $L=3$). Thanks to our notation, we can easily identify a
  certain branch of the tree with the corresponding sequence of indices. Each
  time a nesting level is added, for each branch $s_1s_2\cdots s_{l}$, $l \in
  \{1,\dots,L\}$, finite sequences of coefficients $a$'s and $b$'s (similar to
  $a_{s_1s_2, d_{s_1s_2\cdot}k}$ and $b^{d_{s_1}}_{\bm{d}_{s_1},k}$ above) will
  appear and their structure can be deduced from Equation (\ref{as.nk.2}).  More
  precisely, as in Equations (\ref{bk}) and (\ref{bk.2}), the sequence of $b$'s
  can always be interpreted as the coefficients of the Cauchy product of the
  polynomials with $a$'s as coefficients.

  The structure of the $a$'s at each branch $s_1\dots s_{l}$ is more
  complicated. For any branch $s_1 \dots s_{L}$, that is on the ultimate level
  of nesting, the $a$'s are simply the Bell polynomials applied to the function
  $\cpsi_{s_1\dots s_{L-1}, s_1\cdots s_{L}}$ and its derivatives. For any other
  branch $s_1 \dots s_{l}$, $l \in \{ 1,\dots, L-1 \}$, the coefficients $a$'s
  are the (Euclidean) inner product of the vector of all Bell polynomials
  applied to the function $\cpsi_{s_1\dots s_{l}, s_1\dots s_{l+1}}$ and its
  derivatives with the vector of all coefficients $b$'s, the exact structure of
  Equation (\ref{as.nk.2}). In Equation (\ref{as.nk.2}), the level $l$ is equal
  to $1=L-2$ and the term $a_{s_1s_2,lk}(t_{s_1s_2}^\ast(\bm{u}_{s_1s_2}))$
  stands for the Bell polynomial applied to $\cpsi_{s_1\dots s_{l}, s_1\dots
    s_{l+1}}$ and its derivatives, while the $l$th member of the sequence of
  $b$'s is defined by
\begin{align*}
\sum_{\bm{j} \in \mathcal{Q}^{d_{s_1s_2}}_{\bm{d}_{s_1s_2},l}}
    \prod_{s_3=1}^{d_{s_1s_2}}B_{d_{s_1s_2s_3},j_{s_3}}\biggl
    (\bigl(\cpsi_{s_1s_2,s_1s_2s_3}^{(k)}(t_{s_1s_2s_3}(\bm{u}_{s_1s_2s_3}))\bigr)_{k\in\{ 1,\dots, d_{s_1s_2s_3}- j_{s_3}+1 \} }\biggr),
\end{align*}
the term appearing in (\ref{as.nk.2}).
\end{remark}

\subsection*{Acknowledgements}
The authors would like to thank Paul Embrechts (ETH Zurich) for giving us
feedback and the opportunity to work on this topic.

\appendix
\section{Appendix}
\begin{proof}[Proof of Lemma \ref{lem.Bell}]
  \myskip \begin{enumerate}[label=(\arabic*),leftmargin=*,align=left,topsep=\mytopsep,itemsep=\myitemsep]
  \item $\sum_{l=1}^{n-k+1}(x-l)j_l=x\sum_{l=1}^{n-k+1}j_l-\sum_{l=1}^{n-k+1}lj_l=xk-n$.
  \item The identity $B_{n,k}(1,\dots,1)=S(n,k)$ can be found, for example, in \cite[p.\ 135]{comtet1974} or \cite[p.\ 87]{charalambides2005}. It then follows that
    \begin{align*}
      B_{n,k}(x,\dots,x)&=\sum_{\bm{j}\in\mathcal{P}_{n,k}}\binom{n}{j_1,\dots,j_{n-k+1}}\prod_{l=1}^{n-k+1}\biggl(\frac{x}{l!}\biggr)^{j_l}\\
      &=x^kB_{n,k}(1,\dots,1)=S(n,k)x^k,
    \end{align*}
    since $\sum_{l=1}^{n-k+1}j_l=k$ by definition of $\mathcal{P}_{n,k}$.
  \item By definition of $\mathcal{P}_{n,k}$ it follows from $\sum_{l=1}^{n-k+1}lj_l=n$ and \ref{lem.Bell.2} that
    \begin{align*}
      B_{n,k}(-x,\dots,(-1)^{n-k+1}x)&=\sum_{\bm{j}\in\mathcal{P}_{n,k}}\binom{n}{j_1,\dots,j_{n-k+1}}\prod_{l=1}^{n-k+1}\biggl(\frac{(-1)^lx}{l!}\biggr)^{j_l}\\
      &=(-1)^nB_{n,k}(x,\dots,x)=(-1)^nS(n,k)x^k.
    \end{align*}
  \item By (\ref{Bell}),
    \begin{align}
B_{n,k}(g^\prime(x),g^{\prime\prime}(x),\dots,g^{(n-k+1)}(x))=\sum_{\bm{j}\in\mathcal{P}_{n,k}}\binom{n}{j_1,\dots,j_{n-k+1}}\prod_{l=1}^{n-k+1}\biggl(\frac{g^{(l)}(x)}{l!}\biggr)^{j_l}.\label{der.prod}
    \end{align}
    Note that $\sign(g^{(l)}(x))=(-1)^{l-1}$ for all $x$ and $l\in\{1,\dots,n-k+1\}$, so that the sign of the $l$th factor in the product in (\ref{der.prod}) is $(-1)^{(l-1)j_l}$. This implies that
    \begin{align*}
      \sign\prod_{l=1}^{n-k+1}\biggl(\frac{g^{(l)}(x)}{l!}\biggr)^{j_l}=(-1)^{\sum\limits_{l=1}^{n-k+1}(l-1)j_l}=(-1)^{\sum\limits_{l=1}^{n-k+1}lj_l-\sum\limits_{l=1}^{n-k+1}j_l}.
    \end{align*}
    Now since we sum over $\bm{j}\in\mathcal{P}_{n,k}$, we see from (\ref{Bell.Pnk}) that
    \begin{align*}
      \sign\prod_{l=1}^{n-k+1}\biggl(\frac{g^{(l)}(x)}{l!}\biggr)^{j_l}=(-1)^{n-k}
    \end{align*}
    and thus the whole sum in (\ref{der.prod}) has this sign.
  \end{enumerate}
\end{proof}

\begin{proof}[Proof of Proposition \ref{Bell.id}]
  \myskip
  \begin{enumerate}[label=(\arabic*),leftmargin=*,align=left,topsep=\mytopsep,itemsep=\myitemsep]
  \item Let $h(t)=\exp((yt)^x)$. It follows from Fa\`a di Bruno's formula ((\ref{FdB}) and (\ref{FdB.Bell}) with $f(x)=\exp(x)$ and $g(t)=(yt)^x$) and Lemma \ref{lem.Bell} Part \ref{lem.Bell.1} that
    \begin{align*}
      h^{(n)}(t)&=h(t)\sum_{k=1}^n\sum_{\bm{j}\in\mathcal{P}_{n,k}}\binom{n}{j_1,\dots,j_{n-k+1}}\prod_{l=1}^{n-k+1}\biggl(\frac{(x)_ly^xt^{x-l}}{l!}\biggr)^{j_l}\\
      &=h(t)\sum_{k=1}^n\sum_{\bm{j}\in\mathcal{P}_{n,k}}\binom{n}{j_1,\dots,j_{n-k+1}}\prod_{l=1}^{n-k+1}\biggl(\frac{(x)_ly^x}{l!}\biggr)^{j_l}t^{xk-n}\\
      &=h(t)\sum_{k=1}^ny^n\sum_{\bm{j}\in\mathcal{P}_{n,k}}\binom{n}{j_1,\dots,j_{n-k+1}}\prod_{l=1}^{n-k+1}\biggl(\frac{(x)_ly^{x-l}}{l!}\biggr)^{j_l}t^{xk-n}\\
      &=\frac{h(t)}{t^n}\sum_{k=1}^ny^nB_{n,k}((x)_1y^{x-1},\dots,(x)_{n-k+1}y^{x-(n-k+1)})t^{xk}.
    \end{align*}
    On the other hand, we may differentiate the series expansion of $h$ and use
    the identity $e^{-x}\sum_{k=0}^\infty k^lx^k/k!=\sum_{k=0}^lS(l,k)x^k$, see
    \cite{boyadzhiev2009}, with $x$ being $(yt)^x$. With \ref{Stirling.first},
    it then follows that
    \begin{align*}
      h^{(n)}(t)&=\sum_{k=0}^\infty(xk)_n\frac{t^{xk-n}}{k!}y^{xk}=\frac{1}{t^n}\sum_{k=0}^\infty\biggl(\,\sum_{l=1}^n s(n,l)(xk)^l\biggr)\frac{(yt)^{xk}}{k!}\\
      &=\frac{1}{t^{n}}\sum_{l=1}^{n}s(n,l) x^l\sum_{k=0}^{\infty}k^l\frac{(yt)^{xk}}{k!}=\frac{\exp((yt)^x)}{t^n}\sum_{l=1}^{n}s(n,l)x^l\sum_{k=0}^lS(l,k)(yt)^{xk}\\
      &=\frac{h(t)}{t^n}\sum_{k=0}^n\biggl(y^{xk}\sum_{l=k}^nx^ls(n,l)S(l,k)\biggr)t^{xk}=\frac{h(t)}{t^n}\sum_{k=1}^n\biggl(y^{xk}\sum_{l=k}^nx^ls(n,l)S(l,k)\biggr)t^{xk}
    \end{align*}
    Comparing the two representations for $h^{(n)}$ leads to the result as stated.
  \item Since $\sum_{k=1}^j(x)_kS(j,k)=x^j$ and by (\ref{Stirling.first}), one obtains that
    \begin{align*}
      \sum_{k=1}^{n}(-1)_ks_{nk}(x)=\sum_{j=1}^ns(n,j)x^j\sum_{k=1}^j(-1)_kS(j,k)=\sum_{j=1}^ns(n,j)(-x)^j=(-x)_n.
    \end{align*}
  \item For all $x\in(0,1]$, $s_{nk}(x)=(-1)^{n-k}p(n;k)n!/k!$, where the probability mass function $p(n;k)>0$ (in $n\in\{k,k+1,\dots\}$) corresponds to the distribution function whose Laplace-Stieltjes transform is the inner generator appearing in a nested Joe copula; consider \cite[p.\ 99]{hofert2010c} with $V_0=k$ and $\theta_0/\theta_1=x$ to see this. This representation implies that $\sign(s_{nk}(x))=(-1)^{n-k}$.
  \end{enumerate}
\end{proof}

\begin{proof}[Proof of Theorem \ref{main.theorem}]
  \myskip
  \begin{enumerate}[label=(\arabic*),leftmargin=*,align=left,topsep=\mytopsep,itemsep=\myitemsep]
  \item Apply (\ref{FdB.Bell}) with $f(x)=\exp(-v_0x)$ and $g(t)=\cpsi_{0s}(t)$. For the statement about the signs, apply Lemma \ref{lem.Bell} \ref{lem.Bell.4}. For the last statement, note that by Lemma \ref{lem.Bell} \ref{lem.Bell.2}, $a_{s,11}(t)=B_{1,1}(\cpsi_{0s}^\prime(t))=B_{1,1}(1)=S(1,1)\cdot1^1=1$ if $\psi_s=\psi_0$.
  \item Given the form (\ref{psi0sder}) we see that the product appearing as integrand in (\ref{dnac}) can be computed via
    \begin{align*}
      \prod_{s=1}^{d_0}\psis{0s}{(d_s)}(t_s(\bm{u}_s);v_0)&=\prod_{s=1}^{d_0}\sum_{k=1}^{d_s}a_{s,d_sk}(t_s(\bm{u}_s))(-v_0)^k\cdot\prod_{s=1}^{d_0}\psi_{0s}(t_s(\bm{u}_s);v_0)\\
      &=\sum_{k=d_0}^d\var{b}{d_0}{\bm{d},k}(\bm{t}(\bm{u}))(-v_0)^k\cdot\prod_{s=1}^{d_0}\psi_{0s}(t_s(\bm{u}_s);v_0).
    \end{align*}
    Now note that $\prod_{s=1}^{d_0}\psi_{0s}(t_s(\bm{u}_s);v_0)=\exp(-v_0 t(\bm{u}))$. Hence, we obtain
    \begin{align*}
      \prod_{s=1}^{d_0}\psis{0s}{(d_s)}(t_s(\bm{u}_s);v_0)=\sum_{k=d_0}^d\var{b}{d_0}{\bm{d},k}(\bm{t}(\bm{u}))(-v_0)^k\exp(-v_0 t(\bm{u})).
    \end{align*}
    By replacing $v_0$ by $V_0$ and taking the expectation, one obtains
    \begin{align*}
      \IE\biggl[\,\prod_{s=1}^{d_0}\psis{0s}{(d_s)}(t_s(\bm{u}_s);V_0)\biggr]&=\sum_{k=d_0}^d\var{b}{d_0}{\bm{d},k}(\bm{t}(\bm{u}))\IE\bigl[(-V_0)^k\exp(-V_0t(\bm{u}))\bigr]\\
      &=\sum_{k=d_0}^d\var{b}{d_0}{\bm{d},k}(\bm{t}(\bm{u}))\psis{0}{(k)}(t(\bm{u})),
    \end{align*}
    so that, by (\ref{dnac}), the density is of the form as stated.
  \end{enumerate}
\end{proof}

\begin{proof}[Proof of Lemma \ref{FdbMultiStrong}]
Due to Equation (\ref{FdBMulti.hyp1}), observe that the second sum in (\ref{FdBMulti}) may be rewritten as
\begin{align*}
\sum_{\pi:\lvert\pi\rvert=k}\prod_{B\in\pi}\frac{\partial^{\lvert
      B\rvert}}{\prod_{i\in B}\partial x_i}g(\bm{x}) =
\sum_{\bm{j} \in \var{\mathcal{Q}}{m}{\bm{d},k}} \prod_{l=1}^m  \biggl(\, \sum_{\pi_l:\lvert \pi_l \rvert = j_l} \prod_{B \in\pi_l} \frac{\partial^{\lvert
      B\rvert}}{\prod_{i\in B}\partial x_i}g(\bm{x}) \biggr),
\end{align*}
where $\pi_l$ is a partition of $B_l$ with $j_l$ elements, $l\in\{1,\dots,m\}$; note that $j_l\leq d_l$, because $\lvert B_l \rvert = d_l$. Both sides are
equal because  the remaining terms in the sum on the left-hand side are zero as
derivatives with respect to two or more variables belonging to different
partitions vanish. Combining this result with (\ref{h1h2}), it follows from
(\ref{FdBMulti}) that
\begin{align*}
\frac{\partial}{\partial\bm{x}}f(g(\bm{x}))&=\sum_{k=1}^n f^{(k)}(g(\bm{x})) \sum_{\bm{j} \in \var{\mathcal{Q}}{m}{\bm{d},k}} \prod_{l=1}^m  \Biggl(\, \sum_{\pi_l:\lvert \pi_l \rvert = j_l}\biggl(\prod_{B \in\pi_l}\biggl(h_{l1}^{(\lvert B \rvert)}(h_{l2}(\bm{x})) \prod_{i \in B} \frac{\partial}{\partial x_i} h_{2l}(\bm{x})\biggr)\biggr)\Biggr).
\end{align*}
Observe that for any partition $\pi_l$ of $B_l$, $l\in \{ 1,\dots,m \}$, we have that
\begin{align*}
\prod_{B \in\pi_l} \prod_{i \in B} \frac{\partial}{\partial x_i} h_{l2}(\bm{x})
= \prod_{i \in B_l}  \frac{\partial}{\partial x_i} h_{l2}(\bm{x}),
\end{align*}
so that
\begin{align*}
\frac{\partial}{\partial\bm{x}}f(g(\bm{x}))&=\sum_{k=1}^n f^{(k)}(g(\bm{x}))
\sum_{\bm{j} \in \var{\mathcal{Q}}{m}{\bm{d},k}} \prod_{l=1}^m
\biggl(\biggl(\,\prod_{i \in B_l} \frac{\partial}{\partial x_i} h_{l2}(\bm{x})\biggr)
\sum_{\pi_l:\lvert \pi_l \rvert = j_l} \prod_{B \in\pi_l} h_{l1}^{(\lvert B
  \rvert)}(h_{l2}(\bm{x}))\biggr)\\
&=\biggl(\,\prod_{l=1}^m \prod_{i \in B_l} \frac{\partial}{\partial x_i} h_{l2}(\bm{x}) \biggr) \sum_{k=1}^n f^{(k)}(g(\bm{x})) \sum_{\bm{j} \in \var{\mathcal{Q}}{m}{\bm{d},k}} \prod_{l=1}^m \sum_{\pi_l:\lvert \pi_l \rvert = j_l} \prod_{B \in\pi_l} h_{l1}^{(\lvert B
  \rvert)}(h_{l2}(\bm{x})).
\end{align*}
Finally, applying Identity (\ref{BellCombin}) completes the proof.
\end{proof}

\printbibliography[heading=bibintoc]
\end{document}